\newtheorem{theorem}{Theorem}
\newtheorem{proposition}[theorem]{Proposition}
\newtheorem{corollary}[theorem]{Corollary}
\newtheorem{definition}{Definition}
\newcommand{\N}{\mathbb{N}}
\newcommand{\TM}{{T_{\mathbf{M}}}}
\newcommand{\TP}{{T_{\mathbf{P}}}}
\newcommand{\TL}{{T_{\mathbf{L}}}}
\newcommand{\TD}{{T_{\mathbf{D}}}}
\newcommand{\TSS}[1]{{T^{\mathbf{SS}}_{#1}}}
\newcommand{\TDo}[1]{{T^{\mathbf{D}}_{#1}}}
\newcommand{\TAA}[1]{{T^{\mathbf{AA}}_{#1}}}
\newcommand{\TF}[1]{{T^{\mathbf{F}}_{#1}}}
\newcommand{\THam}[1]{{T^{\mathbf{H}}_{#1}}}
\newcommand{\TMT}[1]{{T^{\mathbf{MT}}_{#1}}}
\newcommand{\TSW}[1]{{T^{\mathbf{SW}}_{#1}}}
\newcommand{\TY}[1]{{T^{\mathbf{Y}}_{#1}}}
\newcommand{\SSW}[1]{{S^{\mathbf{SW}}_{#1}}}
\newcommand{\mr}{17 + 12 \sqrt2}
\newcommand{\Ran}{\operatorname*{Ran}}
\newcommand{\Lu}{\L uka\-sie\-wicz }
\newcommand{\R}{\mathbb{R}}
\def\lint#1{\left[{#1}\right[}
\def\rint#1{\left]{#1}\right]}
\def\cint#1{\left[{#1}\right]}
\def\opint#1{\left]{#1}\right[}
\def\uint{[0,1]}
\def\ouint{\left]0,1\right[}
\newenvironment{ilist}{\begin{list}{(\roman{enumi})}{\usecounter{enumi}%
                        \setlength{\listparindent}{0pt}%
                        \setlength{\labelsep}{0.4em}%
                        \setlength{\labelwidth}{2.1em}
                        \setlength{\leftmargin}{2.5em}%
                        \setlength{\rightmargin}{0pt}}}{\end{list}}
\begin{document}


\title{Dominance in the family of Sugeno-Weber t-norms}

\author{Manuel KAUERS}
\address{%
Research Institute for Symbolic Computation\\
Johannes Kepler University Linz, Altenbergerstrasse 69, A-4040 Linz, Austria}
\email{mkauers@risc.uni-linz.ac.at}

\author{Veronika PILLWEIN}
\address{%
Research Institute for Symbolic Computation\\
Johannes Kepler University Linz, Altenbergerstrasse 69, A-4040 Linz, Austria}
\email{vpillwein@risc.uni-linz.ac.at}

\author{Susanne SAMINGER-PLATZ}
\address{%
Department of Knowledge-Based Mathematical Systems,\\
Johannes Kepler University Linz, Altenbergerstrasse 69, A-4040 Linz, Austria}
\email{susanne.saminger-platz@jku.at}

\begin{abstract}
The dominance relationship between two members of the family of Sugeno Weber t-norms is proven by using a quantifer elimination algorithm.  Further it is shown that dominance is a transitive, and therefore also an order relation, on this family of t-norms.
\end{abstract}

\maketitle

\section{Introduction}
Dominance is a functional inequality which arises in different application fields. It most often appears when discussing the preservation of properties during (dis-)aggregation processes like, e.g., in flexible querying, preference modelling or computer-assisted assessment~\cite{DeBMes98,DiazMontesDeBaets07,Saminger05,SamingerMesiarBodenhofer02}. It is further crucial in the construction of Cartesian products of probabilistic metric and normed spaces~\cite{LafuerzaGuillen04,SchSkl83,Tar76} as well as when constructing many-valued equivalence and order relations~\cite{Bodenhofer99,DeBMes98,DeBaetsMesiar02,Val85}.

Introduced in 1976 in the framework of probabilistic metric spaces as an inequality involving two triangle functions (see~\cite{Tar76} and~\cite{SchSkl83} for an early generalization to operations on a partially ordered set), it was soon clear that dominance constitutes a reflexive and antisymmetric relation on the set of all t-norms. That it is not a transitive relation has been proven much later in 2007 in~\cite{Sarkoci08}. This negative answer to a long open question has, to some extent, been surprising. In particular since earlier results showed that for several important single-parametric families of t-norms, dominance is also a transitive and therefore an order relation~\cite{KleMesPap00,SamingerDeBaetsDeMeyer06,SamingerSarkociDeBaets06,Sarkoci05,She84} (see also~\cite{Saminger09} for an overview and additional results on families of t-norms and copulas).

The family of Sugeno-Weber t-norms has been one of the more prominent families of t-norms for which the dominance has not been completely characterized so far. First partial results could be achieved in~\cite{SamingerDeBaetsDeMeyer09} by invoking results on different sufficient conditions derived from a generalization of the Mulholland inequality and involving the additive generators of the t-norms, their pseudo-inverses and their derivatives (for more details on the differential conditions see~\cite{SamingerDeBaetsDeMeyer09}, for the generalization of the Mulholland inequality look at~\cite{SamingerDeBaetsDeMeyer08}).

The purpose of this paper is to close this gap. We present a proof for a complete characterization of dominance in the family of Sugeno-Weber t-norms.
This is interesting because from all the families of t-norms discussed in Section~4 in the monograph~\cite{KleMesPap00}, the family of Sugeno-Weber t-norms was, until now, the only family for which no complete classification result was available. But there are further aspects which make our results interesting:

First, the solution sets are of a completely different form than witnessed before for other families. So far dominance in single-parametric families has either been in complete accordance with the ordering in the family (i.e., dominance constitutes a linear order on the family of t-norms) or dominance has rarely appeared between family members, i.e., holds only in the trivial cases of self-dominance or when involving maximal or minimal elements of the family. For the family of Sugeno-Weber t-norms neither is the case.

Second, although the solution sets look different, dominance is a transitive relation on the family of Sugeno-Weber t-norms.

Third, the results have been achieved by the use of symbolic computation algorithms. More explicitly  a quantifier elimination algorithm for real closed fields (Cylindrical Algebraic Decomposition) has, after several transformation steps, been applied to logical equivalent formulations of the original problems. The present contribution is therefore also an example of a successful application of computer algebra and symbolic computation for solving polynomial inequalities.

The following preliminaries shall clarify the necessary notions and summarize basic facts about the family of Sugeno-Weber t-norms. Dominance as well as some basic aspects of quantifier elimination algorithms will be explained. We then provide and prove the main results --- the characterization of dominance between two t-norms of the family of Sugeno-Weber t-norms and transitivity of dominance in the family. We finally discuss the results in more detail.

\section{Preliminaries}
\subsection{Triangular norms}
We briefly summarize some basic properties of t-norms for a thorough understanding of this paper. Excellent overviews on and discussions of triangular norms (including historical accounts, further details, proofs, and references) can be found in the monographs~\cite{AlsinaFrankSchweizer06,KleMesPap00}, the edited volume~\cite{KleMes05} and the articles~\cite{KleMesPap04b,KleMesPap04c,KleMesPap04d}.

\begin{definition}\label{def:TNorm}
A \emph{triangular norm} (briefly \emph{t-norm}) $T\colon\uint^2\to \uint$ is a binary operation on the unit interval which is commutative, associative, increasing and has neutral element $1$.
\end{definition}

Speaking in more algebraic terms, $T$ turns the unit interval into an ordered Abelian semigroup whose neutral element is 1.

The most prominent examples of t-norms are the \emph{minimum} $\TM$, the \emph{product} $\TP$,  the \emph{\Lu t-norm} $\TL$ and the \emph{drastic product} $\TD$.
They are defined by $\TM(u,v)= \min(u,v)$, $\TP(u,v)=u\cdot v$, $\TL(u,v)= \max(u+v-1,0)$, and
\[
\TD(u,v)=
\begin{cases}
\min(u,v),&\text{if }\max(u,v)=1,\\
0, &\text{otherwise.}
\end{cases}
\]

Obviously, the basic t-norms $\TM, \TP$ and $\TL$ are continuous, whereas the drastic product $\TD$ is not. The comparison of two t-norms is done pointwisely, i.e., if, for all $x,y\in\uint$, it holds that $T_1(x,y)\ge T_2(x,y)$, then we say that $T_1$ is \emph{stronger} than $T_2$ and denote it by $T_1\ge T_2$. The minimum $\TM$ is the strongest of all t-norms, the drastic product $\TD$ is the weakest of all t-norms. Moreover, the four basic t-norms are ordered in the following way: $\TD \leq \TL \leq \TP \leq \TM$.

\begin{definition}
A t-norm $T$ is called
\begin{ilist}
\item \emph{Archimedean} if for all $u,v\in\ouint$ there exists an $n\in\N$ such that
\[
T(\underbrace{u,\ldots, u}_{n \text{ times}})<v\,.
\]
\item A t-norm $T$ is called \emph{strict} if it is continuous and strictly monotone, i.e., for all $u,v,w\in\uint$ it holds that
\[
T(u,v)<T(u,w)\quad \text{whenever} \quad u>0 \text{ and }v<w\,.
\]
\item A t-norm $T$ is called \emph{nilpotent} if it is continuous and if each $u\in\ouint$ is a nilpotent element of $T$, i.e., there exists some $n\in\N$ such that
\[
T(\underbrace{u,\ldots, u}_{n \text{ times}})=0\,.
\]
\end{ilist}
\end{definition}

Note that for a strict t-norm $T$ it holds that $T(u,v)>0$ for all $u,v\in \,]0,1]$, while for a nilpotent t-norm $T$ it holds that for every $u\in\ouint$ there exists some $v\in\ouint$ such that
$T(u,v)=0$ (each $u\in\ouint$ is a so-called \emph{zero divisor}). Therefore for a nilpotent t-norm $T_1$ and a strict t-norm $T_2$ it can never hold that $T_1\ge T_2$.

\subsubsection{The family of Sugeno-Weber t-norms}
In 1983 S.\ Weber proposed the use of this particular family for modelling the intersection of fuzzy sets~\cite{Web83}. Since then, its dual operations, the Sugeno-Weber t-conorms, defined for all $\lambda\in\cint{0,\infty}$ and all $u,v\in\uint$ by $\SSW{\lambda}(u,v)=1-\TSW{\lambda}(1-u,1-v)$, have played a prominent role for generalized decomposable measures ~\cite{KleWeb91,KleWeb99,Pap02,Web84}, in particular, since they already appeared as possible generalized additions in the context of $\lambda$-fuzzy measures in~\cite{Sug74}.

The family of Sugeno-Weber t-norms $(\TSW{\lambda})_{\lambda\in\cint{0,\infty}}$ is, for all $u,v\in\uint$, given by
\[
\TSW{\lambda}(u,v)=
    \begin{cases}
    \TP(u,v), &\text{if } \lambda=0,\\
    \TD(u,v), &\text{if }\lambda=\infty,\\
    \max(0, (1-\lambda)uv +\lambda (u+v-1)), &\text{if }\lambda\in\opint{0,\infty}.
    \end{cases}
\]
The family is of particular interest since all but two of its members are nilpotent t-norms.
Parameters $\lambda\in \opint{0,\infty}$ lead to nilpotent t-norms (with $\TSW{1}=\TL$ as special case), while $\TSW{0}=\TP$ is the only strict member. For $\lambda\in \lint{0,\infty}$, the Sugeno-Weber t-norms are continuous Archimedean t-norms~\cite{May94,Sug74,Web83}, for $\lambda\in\cint{0,1}$ they can be interpreted as convex combinations of $\TL$ and $\TP$, and are therefore also copulas (for more details on copulas see also~\cite{Nelsen06}).

For $\lambda\in\ouint$, an interesting characterization of the family members has been provided in~\cite{May94}: A t-norm $T$ is a Sugeno-Weber t-norm $\TSW{\lambda}$ with $\lambda\in\ouint$ if and only if $T$ is nilpotent, has an additive generator such that for each $w\in\uint$, the graph of the vertical section $T(x,.)$ of $T$ is a straight line segment (for more details on the additive generators and construction methods for t-norms we again refer to the  monographs~\cite{AlsinaFrankSchweizer06,KleMesPap00}, the edited volume~\cite{KleMes05} and the articles~\cite{KleMesPap04b,KleMesPap04c,KleMesPap04d}).

\subsection{Dominance}
The dominance relation has, as t-norms do, its roots in the field of probabilistic metric spaces~\cite{SchSkl83,Tar76}. It was originally introduced for associative operations (with common neutral element) on a partially ordered set~\cite{SchSkl83}, and has been further investigated for t-norms~\cite{SamingerDeBaetsDeMeyer06,Sarkoci05,Sarkoci08,Tar84} and aggregation functions~\cite{Saminger05,SamingerMesiarBodenhofer02,MesiarSaminger04}. For more recent results on dominance between triangle functions resp.~operations on distance distribution functions see also~\cite{SamingerSempi0x}.

We state the definition for t-norms only.

\begin{definition}
Consider two t-norms $T_1$ and $T_2$. We say that $T_1$ \emph{dominates} $T_2$ (or $T_2$ is dominated by $T_1$), denoted by $T_1\gg T_2$, if, for all $x,y,u,v \in\uint$, it holds that
\begin{equation}\label{eg:DefDom}
T_1(T_2(x,y),T_2(u,v))\geq T_2(T_1(x,u),T_1(y,v))\,.
\end{equation}
\end{definition}

As mentioned already earlier, the dominance relation, in particular between t-norms, plays an important role in various topics, such as the construction of Cartesian
products of probabilistic metric and normed spaces~\cite{LafuerzaGuillen04,SchSkl83,Tar76}, the construction of many-valued equivalence relations~\cite{DeBMes98,DeBaetsMesiar02,Val85} and many-valued order relations~\cite{Bodenhofer99}, the preservation of various properties during (dis-)aggregation processes in flexible querying, preference modelling and computer-assisted
assessment~\cite{DeBMes98,DiazMontesDeBaets07,Saminger05,SamingerMesiarBodenhofer02}.

Every t-norm, in fact every function non-decreasing in each of its arguments, is dominated by $\TM$. Moreover, every t-norm dominates itself and $\TD$. Since all t-norms have neutral element $1$, dominance between two t-norms implies their comparability: $T_1\gg T_2$ implies $T_1\ge T_2$. The converse does not hold.

Due to the induced comparability it also follows that dominance is an antisymmetric relation on the class of t-norms. Associativity and symmetry ensure that dominance is also reflexive on the class of t-norms.

Although dominance is not a transitive relation on the set of continuous, and therefore also not on the set of all, t-norms (see the results by Sarkoci~\cite{Sarkoci08,Sarkoci0x} and also~\cite{SamingerSarkociDeBaets06}), it is transitive on several single-parameteric families of t-norms (see also Table~\ref{tab:families}).

It is interesting to see that in all the cases displayed in Table~\ref{tab:families}, dominance is either in complete accordance with the ordering in the family (i.e., dominance constitutes a linear order on the family of t-norms) or dominance rarely appears among family members, i.e., holds only in the trivial cases of self-dominance and dominance involving maximal or minimal elements of the family (for an overview on known results and referential details see~\cite{Saminger09}).

\begin{table}
\begin{center}
\begin{tabular}{|l|c|c|}\hline
        Family of t-norms&$T_\lambda\gg T_\mu$&  Hasse-Diagrams\\[1ex]\hline
        &&\\
        Schweizer-Sklar
        $(\TSS{\lambda})_{\lambda\in\cint{-\infty,\infty}}$&$\lambda\le\mu$&
            \multirow{11}{*}{\includegraphics[height=3cm]{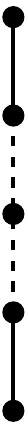}} \\
            \hspace*{10ex}{\tiny (Sherwood, 1984)}&&\\[1ex]
        Acz\'{e}l-Alsina
        $(\TAA{\lambda})_{\lambda\in\cint{0,\infty}}$&\multirow{4}{*}{$\lambda\ge\mu$}& \\
        Dombi $(\TDo{\lambda})_{\lambda\in\cint{0,\infty}}$&&\\
        Yager $(\TY{\lambda})_{\lambda\in\cint{0,\infty}}$&&\\
            \hspace*{10ex} {\tiny (Klement, Mesiar, Pap, 2000)}&&\\[1ex]
        $(T_\lambda^\mathbf{8})_{\lambda\in\cint{0,\infty}}$ &\multirow{4}{*}{$\lambda\ge\mu$}&\\
        $(T_\lambda^\mathbf{15})_{\lambda\in\cint{0,\infty}}$ &&\\
        $(T_\lambda^\mathbf{22})_{\lambda\in\cint{0,\infty}}$ &&\\
        $(T_\lambda^\mathbf{23})_{\lambda\in\cint{0,\infty}}$ &&\\
            \hspace*{10ex} {\tiny (Saminger-Platz, 2009)}&&\\[1ex]\hline
            &&\\
        Frank $(\TF{\lambda})_{\lambda\in\cint{0,\infty}}$&
            \multirow{3}{*}{$\lambda=0$, $\lambda=\mu$, $\mu=\infty$} &
            \multirow{5}{*}{\includegraphics[height=1cm]{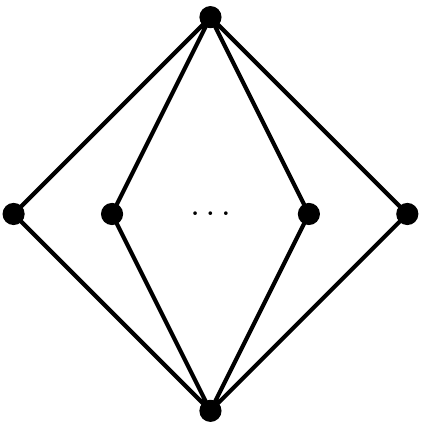}}\\
        Hamacher $(\THam{\lambda})_{\lambda\in\cint{0,\infty}}$&&\\
             \hspace*{10ex}{\tiny (Sarkoci, 2005)}&&\\[1ex]
        $(T_\lambda^\mathbf{9})_{\lambda\in\cint{0,\infty}}$ &
            \multirow{2}{*}{$\lambda=\infty$, $\lambda=\mu$, $\mu=0$} &\\
            \hspace*{10ex} {\tiny (Saminger-Platz, 2009)}&&\\[1ex]\hline
            &&\\
        \noindent Mayor-Torrens $(\TMT{\lambda})_{\lambda\in\uint}$&
            \multirow{3}{*}{$\lambda=0,\lambda=\mu$}&
            \multirow{3}{*}{\includegraphics[height=0.75cm]{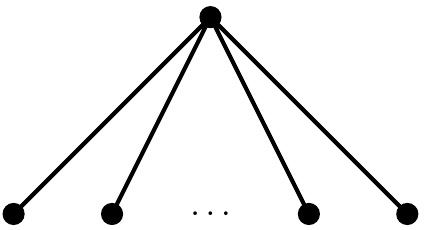}}\\
        Dubois-Prade $(T_\lambda^{\mathbf{DP}})_{\lambda\in\uint}$&&\\
            \hspace*{10ex}{\tiny (Saminger, De Baets, De Meyer, 2005)}&&\\[1ex]\hline
        \end{tabular}
\end{center}
\caption{Dominance relation in several families of t-norms}
\label{tab:families}
\end{table}

We will show below that dominance between two members of the family of Sugeno-Weber t-norms is of a complete different type, although finally dominance also turns out to be a transitive relation on this family.

\subsubsection{The family of Sugeno-Weber t-norms}
The members of the family form a decreasing sequence of t-norms with respect to their parameter, i.e., $\TSW{\lambda}\geq \TSW{\mu}$ if and only if $\lambda\leq \mu$.  Since dominance induces order on the t-norms involved, it is therefore clear that a necessary condition for $\TSW{\lambda}\gg\TSW{\mu}$ is that $\lambda\leq \mu$.

Dominance among the family members has further been studied in~\cite{SamingerDeBaetsDeMeyer09} by invoking results on different sufficient conditions derived from a generalization of the Mulholland inequality and involving the additive generators of the t-norms, their pseudo-inverses and their derivatives (for more details on the differential conditions see~\cite{SamingerDeBaetsDeMeyer09}, for the generalization of the Mulholland inequality see~\cite{SamingerDeBaetsDeMeyer08}). The results obtained did not lead to a full characterization of dominance in the whole family, but already indicated that the dominance structure might be of a completely different structure than the dominance relationship laid bare in any other family before. We quote the result from~\cite{SamingerDeBaetsDeMeyer09}.

\begin{proposition}\label{cor:suffcond}
Consider the family of Sugeno-Weber t-norms $(\TSW{\lambda})_{\lambda\in\cint{0,\infty}}$. For all $\lambda,\mu\in\cint{0,\infty}$
such that one of the following condition holds
\begin{itemize}
\item[(i)] $\lambda \le \min(1,\mu)$,
\item[(ii)] $1<\lambda\leq\mu\leq r^*$, with $r^*=6.00914$ denoting the second root of $\log^2(t)+\log(t)-t+1$,
\end{itemize}
it follows that $\TSW{\lambda}\gg \TSW{\mu}$.
\end{proposition}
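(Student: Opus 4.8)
The plan is to reduce dominance to a one-parameter analytic inequality through the additive generators of the two t-norms, and then to apply a sufficient condition of Mulholland type. For $\lambda\in\opint{0,\infty}$ the t-norm $\TSW{\lambda}$ is nilpotent and hence continuous Archimedean, so it admits an additive generator $t_\lambda$. Starting from the elementary identity
\[
(1-\lambda)\,uv + \lambda(u+v-1) + \tfrac{\lambda}{1-\lambda} = (1-\lambda)\Bigl(u + \tfrac{\lambda}{1-\lambda}\Bigr)\Bigl(v+\tfrac{\lambda}{1-\lambda}\Bigr),
\]
the defining law turns into a Cauchy functional equation after taking logarithms, which yields a generator of the form $t_\lambda(u)=c_\lambda\,\log\bigl(\lambda+(1-\lambda)u\bigr)$ for a suitable normalizing constant $c_\lambda$ (whose sign flips at $\lambda=1$, where the logarithm degenerates to the affine generator of $\TL$), together with its pseudo-inverse obtained by inversion and truncation at $t_\lambda(0)=c_\lambda\log\lambda$. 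The degenerate parameter values are dealt with separately and cheaply: $\TSW{0}=\TP$ is strict with $t_0(u)=-\log u$, while $\TSW{\infty}=\TD$ and the diagonal cases $\lambda=\mu$ are absorbed by the preliminary facts that every t-norm dominates $\TD$ and dominates itself.

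With the generators in hand, I would invoke the generalized Mulholland inequality: writing $h_{\lambda,\mu}=t_\lambda\circ t_\mu^{(-1)}$, one has $\TSW{\lambda}\gg\TSW{\mu}$ as soon as $h_{\lambda,\mu}$ is continuous and strictly increasing with $h_{\lambda,\mu}(0)=0$, and is both convex and \emph{geometrically convex}, i.e.\ $s\mapsto\log h_{\lambda,\mu}(e^{s})$ is convex. Substituting the log-of-affine generators, $h_{\lambda,\mu}$ collapses to $c_\lambda\log\bigl(((1-\lambda)e^{y/c_\mu}+(\lambda-\mu))/(1-\mu)\bigr)$, an elementary logarithm of an exponential plus a constant; in particular $h_{\lambda,\mu}(0)=0$ is immediate, and the first two derivatives can be written in closed form. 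Ordinary convexity of $h_{\lambda,\mu}$ then reduces to a sign condition that holds throughout the admissible range $\lambda\le\mu$ and is comparatively routine to check.

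The decisive step is the verification of geometric convexity. Setting $g(s)=\log h_{\lambda,\mu}(e^{s})$ and computing $g''(s)$, nonnegativity of $g''$ reduces, after clearing manifestly positive factors, to an inequality in a single auxiliary variable $t$ ranging over the values of the generator argument, and its critical case is governed precisely by the expression $\log^2(t)+\log(t)-t+1$. For $\lambda\le\min(1,\mu)$ the relevant quantity keeps a constant favourable sign for \emph{all} admissible values of the auxiliary variable, with no upper cut-off required, which yields case~(i); for $1<\lambda\le\mu$ the sign condition persists exactly as long as the auxiliary variable stays below the second root $r^{*}=6.00914$ of $\log^2(t)+\log(t)-t+1$, and this constraint translates into $\mu\le r^{*}$, giving case~(ii).

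Assembling the pieces is then straightforward: the degenerate cases follow from the elementary dominance facts recalled in the preliminaries, and the two genuine regimes follow from the Mulholland criterion combined with the two sign analyses above. I expect essentially all the difficulty to lie in this geometric-convexity computation --- specifically in isolating the one-variable reduction and identifying $r^{*}$ as the exact threshold at which $g''$ changes sign --- whereas the convexity part and the boundary cases are comparatively mechanical. It is worth noting already here that the criterion is only \emph{sufficient}: its failure for $\mu>r^{*}$ does not by itself rule out dominance, which is exactly why this approach stops short of a complete characterization and leaves open the gap that the remainder of the paper sets out to close.
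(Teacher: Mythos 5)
Your proposal is correct and follows essentially the same approach as the paper: the paper does not prove Proposition~\ref{cor:suffcond} itself but quotes it from~\cite{SamingerDeBaetsDeMeyer09}, and what you reconstruct --- the log-of-affine additive generators $t_\lambda(u)=c_\lambda\log\bigl(\lambda+(1-\lambda)u\bigr)$, the generalized Mulholland criterion (convexity plus geometric convexity of $h_{\lambda,\mu}=t_\lambda\circ t_\mu^{(-1)}$) from~\cite{SamingerDeBaetsDeMeyer08}, and the threshold $r^*$ arising as the second root of $\log^2(t)+\log(t)-t+1$ --- is precisely the method of that reference. Your supporting details check out (the multiplicative-generator identity, the closed form and normalization $h_{\lambda,\mu}(0)=0$, convexity holding exactly when $\lambda\le\mu$, and the remark that the criterion is only sufficient, which is why the $\mu>r^*$ gap remains), so this matches the paper's route at, indeed beyond, the level of detail the paper itself provides.
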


\subsection{Quantifier Elimination and Cylindrical Algebraic Decomposition}
In contrast to many other families of t-norms, the dominance relation for Sugeno-Weber t-norms does not involve any logarithms or exponentials but can be formulated by addition, multiplication and the~$\max$-operation only. This is a striking structural advantage, because there are algorithms available for proving this kind of formulas automatically. The first algorithm for proving formulas about polynomial inequalities was already given by Tarski in the early 1950s~\cite{Tarski51} but his algorithm was only of theoretical interest. Nowadays, modern implementations~\cite{Brown03,SeidlSturm03,Strzebonski00} of Collins' algorithm for Cylindrical Algebraic Decomposition (CAD)~\cite{Collins75} make it possible to actually perform nontrivial computations within a reasonable amount of time. They are meanwhile established as valuable tools for solving problems about polynomial inequalities.

In general, the input to CAD is a formula of the form
\[
 \mathrm{Q}_1\,x_1\in \R\ \cdots\ \mathrm{Q}_n\,x_n\in \R: A(x_1,\dots,x_n,y_1,\dots,y_m)
\]
where the $\mathrm{Q}_i$ are quantifiers (either $\forall$ or $\exists$) and $A$
is a boolean combination of polynomial equations and inequalities in the
variables $x_i$ and $y_i$.  The variables $x_i$ are bound by quantifiers, the
variables $y_i$ are free.  Given such a formula, the algorithm computes a
quantifier free formula $B(y_1,\dots,y_m)$ which is equivalent to the input
formula.

A simple example is given by
\[
 \forall\ x\in\R\ \exists\ y\in\R: (x-1)(y-1)>1 \Leftrightarrow x^2+y^2-z^2>0,
\]
where $A(x,y,z)$ is the formula $(x-1)(y-1)>1 \Leftrightarrow x^2+y^2-z^2>0$, the bound
variables are $x,y,$ and $z$ is a free variable. Applied to this formula, the CAD
algorithm may return the quantifier free formula $B(z)=z\leq -1\lor z\geq 1$.
This formula is equivalent to the quantified formula in the sense that for every
real number $z\in\mathbb{R}$ the input formula holds if and only if the output
formula holds.

Applied to a quantified formula with no free variables, CAD will return one of
the two logical constants True or False.
Applied to a formula with only free variables, CAD will produce an equivalent
formula in the same variables which is normalized in a certain sense.

It must be stressed that the equivalence of input and output is not approximate
in any way but completely rigorous. In particular, if CAD applied to a certain
formula $\Phi$ yields the output True, then the trace of the computation
constitutes a lengthy and ugly and insightless but correct and complete and
checkable proof of~$\Phi$. The price to be paid for such a strictly correct
output is that computations may take very long. While it is guaranteed that
every CAD computation will eventually terminate and produce a correct output,
such a guarantee is of little use if the expected runtime exceeds by far our
expected lifetime.

In its original formulation, the dominance relation for Sugeno-Weber t-norms
is an example for a formula which CAD can do in principle but not in practice.
Human interaction is necessary to break the big computation into several smaller
ones, to properly reformulate intermediate results, and to exploit common
properties of different parts of the problem. This has finally lead to a proof
that is explained on five pages, expanded in the Mathematica file available
at http://www.risc.jku.at/people/mkauers/sugeno-weber/proof.nb and
executed in 15 to 30 minutes depending on the actual computation capacities and the
Mathematica version used.

\section{Main results}
\begin{theorem}\label{thm:main1}
Consider the family of Sugeno-Weber t-norms $(\TSW{\lambda})_{\lambda\in\cint{0,\infty}}$. Then, for all $\lambda,\mu\in\cint{0,\infty}$, $\TSW{\lambda}$ dominates $\TSW{\mu}$, $\TSW{\lambda}\gg\TSW{\mu}$, if and only if one of the following conditions holds:
\begin{ilist}
\item $\lambda=0$,
\item $\mu=\infty$,
\item $\lambda=\mu$,
\item $0 < \lambda<\mu \leq 17 + 12 \sqrt2$,
\item $17 + 12 \sqrt2<\mu$ and $0 < \lambda \leq \Bigl(\frac{1 - 3 \sqrt{\mu}}{3-\sqrt\mu}\Bigr)^2$.
\end{ilist}
\end{theorem}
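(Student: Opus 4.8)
The plan is to read the universally quantified inequality~\eqref{eg:DefDom}, specialized to $T_1=\TSW{\lambda}$ and $T_2=\TSW{\mu}$, as a single first-order formula over the reals in which $x,y,u,v$ are bound by $\forall$ and $\lambda,\mu$ are free, and to let a Cylindrical Algebraic Decomposition return the quantifier-free description of the admissible parameter set. Since CAD output is rigorously equivalent to its input, one computation settles both directions of the ``if and only if'' simultaneously; the only creative task is to break the monolithic formula into subproblems small enough to be feasible.

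First I would remove the degenerate parameter values. Reflexivity yields case~(iii) $\lambda=\mu$; the fact that every t-norm dominates $\TD=\TSW{\infty}$ yields case~(ii) $\mu=\infty$; and Proposition~\ref{cor:suffcond}(i), which applies whenever $\lambda\le\min(1,\mu)$, yields dominance for every $\lambda\le 1$ with $\lambda\le\mu$ (in particular case~(i) $\lambda=0$ and the portion of case~(iv) with $\lambda\le 1$). The known necessary condition $\lambda\le\mu$ discards all parameters with $\lambda>\mu$. This leaves the substantive range $1<\lambda\le\mu<\infty$, on which both t-norms lie on the single branch $\TSW{\alpha}(s,t)=\max\bigl(0,(1-\alpha)st+\alpha(s+t-1)\bigr)$.

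On this range I would expand~\eqref{eg:DefDom}. Writing $p=\TSW{\mu}(x,y)$, $q=\TSW{\mu}(u,v)$, $a=\TSW{\lambda}(x,u)$, $b=\TSW{\lambda}(y,v)$, the assertion is $\TSW{\lambda}(p,q)\ge\TSW{\mu}(a,b)$, where each of the six evaluations is a $\max(0,\cdot)$ of a quadratic. I would eliminate the maxima by splitting on the signs of these six quadratics. The split stays manageable for two reasons: when the outer right-hand maximum is $0$ the inequality reads $\TSW{\lambda}(p,q)\ge 0$ and holds trivially, and several sign patterns are incompatible under $1<\lambda\le\mu$. In each surviving cell the claim becomes a genuine polynomial inequality in $x,y,u,v$ with parameters $\lambda,\mu$, which I hand to CAD with $x,y,u,v$ universally quantified over $\uint$ and $\lambda,\mu$ free; the conjunction of the returned quantifier-free conditions is the exact admissible region.

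It remains to recognise this region as the stated disjunction and to locate the thresholds. The substitution $\lambda=r^2$, $\mu=s^2$ is the natural one: it turns the curve bounding case~(v) into the rational relation $s=\frac{3r-1}{r-3}$, which meets the diagonal $r=s$ exactly at $r=s=3+2\sqrt2$, that is, at $\mu=(3+2\sqrt2)^2=17+12\sqrt2$ --- precisely the value at which the unrestricted regime~(iv) gives way to the restricted regime~(v). I expect the genuine difficulty to be feasibility rather than principle: a single CAD over four bound and two free variables carrying six nested maxima is hopeless in practice, so the real work is the human-guided orchestration of projection orders, the reformulation of intermediate cells so that each sub-CAD terminates in reasonable time, and the exploitation of structure shared between cells --- together with the delicate extraction of the clean boundary $s=\frac{3r-1}{r-3}$ from the raw cell data, in which it first appears only implicitly as the vanishing of a discriminant.
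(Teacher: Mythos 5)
Your proposal follows essentially the same route as the paper: dispose of the trivial parameter values, eliminate the outer and inner maxima by splitting on the signs of the polynomial branches (discarding the sign patterns that are infeasible for $0<\lambda<\mu$), and then let a CAD with $\lambda,\mu$ free return the exact parameter region, whose boundary you correctly identify via $\lambda=r^2$, $\mu=s^2$ as $s=\frac{3r-1}{r-3}$, meeting the diagonal at $\mu=17+12\sqrt2$. The only substantive difference is that the paper's real work lies precisely in the concrete steps you defer to ``human-guided orchestration'' --- the substitutions $x\mapsto 1-x$, $y\mapsto 1-y$, $u\mapsto 1-u$, $v\mapsto 1-v$ and $v\mapsto(\tilde{v}-y)/(1+(\lambda-1)y)$ that collapse the single surviving sign-pattern clause to a two-inequality disjunction, the quantifier order $\mu,\lambda,u,y,\tilde{v},x$, and the negation-plus-complement trick --- without which the CAD computations you envisage would not terminate in practice.
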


In Fig.~\ref{fig:solutionsets}, we have illustrated for which parameters $\lambda$ for a given parameter $\mu$ it holds that $\TSW{\lambda}$ dominates $\TSW{\mu}$. In Section~\ref{sec:disc} the solution sets will be discussed in more detail.
\begin{figure}
\begin{center}
\includegraphics[width=0.4\textwidth]{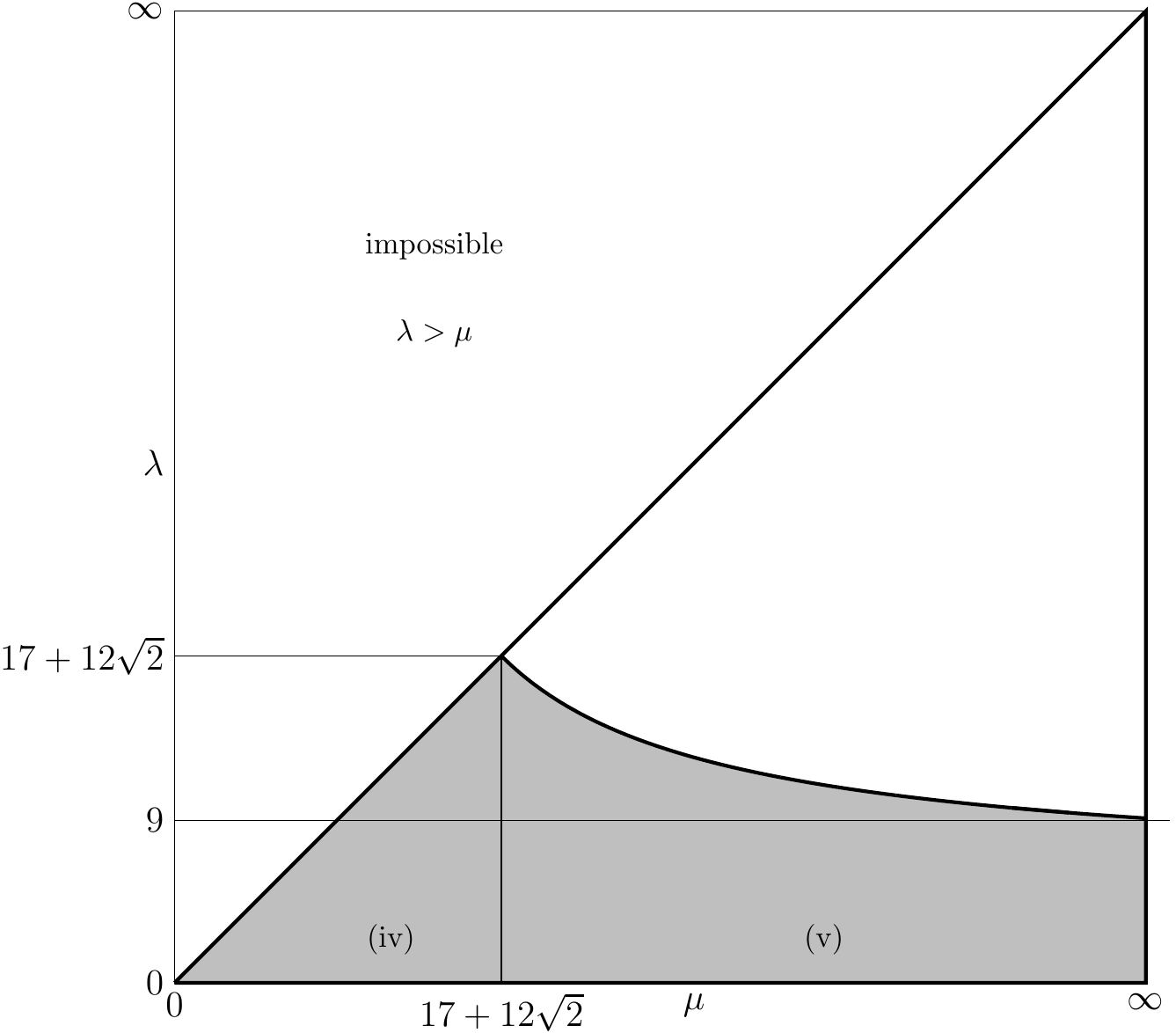}
\end{center}
\caption{Relationship between parameters $\lambda$ and $\mu$ for $\TSW{\lambda}$ dominating $\TSW{\mu}$}
\label{fig:solutionset}
\end{figure}

Based on the characterization of dominance between two members of the family of Sugeno-Weber t-norms, another CAD computation directly applied to the problem of transitivity in the family asserts the transitivity of the relation on the family within less than two seconds. Note also that an alternative proof of the transitivity of dominance in the family of Sugeno-Weber t-norms is given in Section~\ref{sec:AltProofTrans}. In any case, we can state:

\begin{proposition}
Dominance is a transitive, and therefore an order, relation on the set of all Sugeno-Weber t-norms.
\end{proposition}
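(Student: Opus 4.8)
The plan is to reduce the proposition to \emph{transitivity} alone, since the remaining order axioms are already recorded in the preliminaries: every t-norm dominates itself (reflexivity), and $T_1\gg T_2$ implies $T_1\ge T_2$, which yields antisymmetry. So the task is to show that $\TSW{\lambda}\gg\TSW{\mu}$ and $\TSW{\mu}\gg\TSW{\nu}$ together force $\TSW{\lambda}\gg\TSW{\nu}$. The decisive move is to abandon the functional inequality \eqref{eg:DefDom} and argue entirely on the level of parameters: by Theorem~\ref{thm:main1} the predicate $\TSW{\lambda}\gg\TSW{\mu}$ is logically equivalent to the disjunction of conditions (i)--(v) in $(\lambda,\mu)$. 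Transitivity thereby becomes the purely arithmetic implication that, for all $\lambda,\mu,\nu\in\cint{0,\infty}$, if $(\lambda,\mu)$ and $(\mu,\nu)$ each satisfy one of (i)--(v), then so does $(\lambda,\nu)$. After the substitutions $\mu=s^2$, $\nu=t^2$ (to clear the square roots) and a separate bookkeeping of the symbol $\infty$, this is a closed formula over a real closed field, which is exactly why feeding it to the same CAD engine settles the matter in under two seconds; but a short hand proof is also available, which I sketch next.

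First I would dispose of the boundary cases. If $\lambda=0$ then (i) already gives $\TSW{\lambda}\gg\TSW{\nu}$; if $\nu=\infty$ then (ii) does; and if $\lambda=\mu$ or $\mu=\nu$ the desired conclusion coincides with one of the two hypotheses. The case $\mu=\infty$ is vacuous: $\TSW{\infty}=\TD$ is the weakest t-norm, so $\TD\gg\TSW{\nu}$ would force $\TSW{\nu}=\TD$, i.e.\ $\nu=\infty$, contradicting $\nu<\infty$. In the remaining situation we have $0<\lambda$, both $\mu,\nu$ finite, $\lambda\ne\mu$ and $\mu\ne\nu$; and since dominance implies comparability with $\TSW{\lambda}\ge\TSW{\mu}$ equivalent to $\lambda\le\mu$, the two hypotheses sharpen to the strict chain $0<\lambda<\mu<\nu<\infty$.

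The remaining work splits on the position of $\nu$ relative to the threshold $\mr$. If $\nu\le\mr$, then condition (iv) applies directly to $(\lambda,\nu)$, because $0<\lambda<\nu\le\mr$, and we are done. If $\nu>\mr$, then the hypothesis $\TSW{\mu}\gg\TSW{\nu}$ can hold through none of (i)--(iv) --- (i) fails as $\mu>0$, (ii) as $\nu<\infty$, (iii) as $\mu\ne\nu$, (iv) as $\nu>\mr$ --- so it must hold through (v), giving $\mu\le\bigl(\frac{1-3\sqrt\nu}{3-\sqrt\nu}\bigr)^2$. Combining with $\lambda<\mu$ yields $\lambda<\bigl(\frac{1-3\sqrt\nu}{3-\sqrt\nu}\bigr)^2$, which together with $\lambda>0$ and $\nu>\mr$ is precisely condition (v) for the pair $(\lambda,\nu)$. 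Hence $\TSW{\lambda}\gg\TSW{\nu}$ in every case, dominance is transitive, and with reflexivity and antisymmetry it is an order relation on the family.

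The genuinely hard part here is not this argument but the characterization it rests on, namely Theorem~\ref{thm:main1}; once that is in hand the parameter reasoning is essentially the one-line monotonicity observation that $\lambda<\mu\le g(\nu)$ implies $\lambda\le g(\nu)$, where $g$ denotes the bound appearing in (v). The only points demanding care are of a bookkeeping nature: verifying that the piecewise description is internally consistent at the threshold --- one checks $\mr=(3+2\sqrt2)^2$, so $\sqrt{\mr}=3+2\sqrt2$ and the bound in (v) equals $\mr$ at $\nu=\mr$, matching the transition from (iv) --- and confirming that $g(\nu)<\nu$ for $\nu>\mr$, so that (v) really cuts out a proper initial segment of admissible $\lambda$ and the cases do not secretly overlap.
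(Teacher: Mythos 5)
Your proof is correct, and it takes a genuinely different route from both of the paper's arguments, although all three share the same essential reduction: once Theorem~\ref{thm:main1} is in hand, the functional inequality \eqref{eg:DefDom} is never touched again and transitivity becomes pure parameter arithmetic. The paper settles that arithmetic statement in two ways: first by one more CAD computation (applied to exactly the closed formula you describe), and second by the hand proof of Section~\ref{sec:AltProofTrans}, which runs through the machinery of Corollaries~\ref{lem0}--\ref{lem2} --- the sets $\mathcal{D}_\alpha$ are described via the function $f$ of~\eqref{eq:deff}, and the case distinction is driven by the position of the \emph{middle} parameter $b$ (namely $b\le 9$, $9<b<\mr$, $b\ge \mr$), invoking both the decreasingness and the involutivity of~$f$. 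You instead split on the position of the \emph{largest} parameter $\nu$ relative to $\mr$ and isolate the one structural feature that forces transitivity: in condition (v) the set of admissible dominating parameters is the interval $\rint{0,f(\nu)}$, which depends only on the dominated parameter and is downward closed, so $\lambda<\mu\le f(\nu)$ transfers membership from $\mu$ to $\lambda$; condition (iv) is likewise downward closed in $\lambda$. This makes your proof shorter and independent of the involution property of $f$, which you invoke only as a consistency check at the threshold (and your checks $\sqrt{\mr}=3+2\sqrt2$, $f(\mr)=\mr$, and $f(\nu)<\nu$ for $\nu>\mr$ are all accurate). It is also worth noting that your main case analysis uses the hypothesis $\TSW{\lambda}\gg\TSW{\mu}$ only through the ordering $\lambda<\mu$ it induces, a point the paper's version leaves implicit. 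What the paper's alternative proof buys in exchange is the reuse of corollaries it states anyway in order to describe the solution sets $\mathcal{D}_\alpha$; what the CAD proof buys is complete automation.
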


The remainder of this section is devoted to the proof of Theorem~\ref{thm:main1}, to the necessary steps of reformulating intermediate proof steps, exploiting properties of subparts of the problem, and finding an (equivalent) formulation of the original problem computable and solvable by CAD in reasonable time.

Consider some $\lambda,\mu\in\cint{0,\infty}$. In case that $\lambda=0$, $\mu=\infty$, or $\lambda=\mu$ the result is trivially true since $\TM=\TSW{0}$ dominates all, $\TD=\TSW{\infty}$ is dominated by all t-norms, and every t-norm dominates itself. Since dominance induces order, we may assume w.l.o.g.\ that $0<\lambda<\mu<\infty$. Moreover, the dominance inequality
\[
\TSW{\lambda}(\TSW{\mu}(x,y),\TSW{\mu}(u,v))\geq \TSW{\mu}(\TSW{\lambda}(x,u),\TSW{\lambda}(y,v))
\]
is trivially fulfilled whenever $1\in\{x,y,u,v\}$ or $0\in\{x,y,u,v\}$ such that we can reformulate the remaining proof goal in the following way:

Determine all $\lambda,\mu\in\opint{0,\infty}$ with $\lambda<\mu$ such that for all $x,y,u,v\in\ouint:$
\[
\TSW{\lambda}(\TSW{\mu}(x,y),\TSW{\mu}(u,v))\geq \TSW{\mu}(\TSW{\lambda}(x,u),\TSW{\lambda}(y,v))
\]

or explicitly such that
  \begin{alignat*}{1}
    &\forall\ x,y,u,v\in\ouint:\\
    &\quad
    \max (0,(1-\lambda) \max (0,(1-\mu ) u v+\mu  (u+v-1)) \max (0,(1-\mu ) x y+\mu (x+y-1))\\
    &\qquad+\lambda (\max (0,(1-\mu ) u v+\mu  (u+v-1))+\max (0,(1-\mu ) x y+\mu  (x+y-1))-1))\\
    &\quad\geq \max(0,(1-\mu ) \max (0,(1-\lambda) u x+\lambda (u+x-1)) \max (0,(1-\lambda) v y+\lambda
    (v+y-1))\\
    &\qquad+\mu  (\max (0,(1-\lambda) u x+\lambda (u+x-1))+\max (0,(1-\lambda) v y+\lambda
    (v+y-1))-1)).
  \end{alignat*}

As mentioned earlier this problem is so that a final result can in principle be obtained directly by application of a quantifier elimination algorithm (like, e.g., CAD) for real closed fields. However, in practice this computation would take very long. By a series of appropriate simplifications we can reduce the computation time tremendously. The following simplification steps result in an equivalent quantified formula for which quantifier elimination takes a few minutes only.

  \begin{enumerate}
  \item \emph{Eliminate the outer maxima.}
    The body of the formula in question has the form $\max(0,A)\geq\max(0,B)$.
    It is readily confirmed by hand, or by CAD, that
    \[
      \max(0,A)\geq\max(0,B)
      \iff B\leq0\lor A\geq B
      \iff B\leq0\lor A\geq B>0
    \]
    for all real $A,B$. Applying the last equivalence and making the range restrictions on
    $x,y,u,v$ and $\lambda,\mu$ explicit, we arrive at the equivalent formulation
    \begin{alignat*}1
      &\forall\ x,y,u,v\in\mathbb{R}: 0<\lambda<\mu<\infty\land 0<x<1\land 0<y<1\land 0<u<1\land 0<v<1\\
      &\Rightarrow \Bigl( (1-\mu ) \max (0,(1-\lambda) u x+\lambda (u+x-1)) \max (0,(1-\lambda) v y+\lambda
      (v+y-1))\\
      &\qquad+\mu  (\max (0,(1-\lambda) u x+\lambda (u+x-1))+\max (0,(1-\lambda) v y+\lambda
      (v+y-1))-1)\leq 0\\
      &\quad \lor (1-\lambda) \max (0,(1-\mu ) u v+\mu  (u+v-1)) \max (0,(1-\mu ) x y+\mu (x+y-1))\\
      &\qquad+\lambda (\max (0,(1-\mu ) u v+\mu  (u+v-1))+\max (0,(1-\mu ) x y+\mu  (x+y-1))-1))\\
      &\qquad\geq (1-\mu ) \max (0,(1-\lambda) u x+\lambda (u+x-1)) \max (0,(1-\lambda) v y+\lambda
      (v+y-1))\\
      &\qquad+\mu  (\max (0,(1-\lambda) u x+\lambda (u+x-1))+\max (0,(1-\lambda) v y+\lambda
      (v+y-1))-1)>0\Bigr)
    \end{alignat*}

  \item \emph{Eliminate the inner maxima.}
    The new formula still contains four different maximum expressions:
    \begin{alignat*}{1}
      &\max (0,(1-\mu ) u v+\mu  (u+v-1))\text{ and }
      \max (0,(1-\mu ) x y+\mu  (x+y-1))\text{ in $A$;}\\
      &\max (0,(1-\lambda) u x+\lambda (u+x-1))\text{ and }
      \max (0,(1-\lambda) v y+\lambda (v+y-1))\text{ in $B$.}
    \end{alignat*}
    To get rid of these, observe that if $\Phi(X)$ is a formula depending on a real variable~$X$,
    then the following equivalences are valid
    \begin{alignat*}{1}
      \Phi(\max(0,X))
      &\iff(X\leq 0\lor X>0)\land\Phi(\max(0,X))\\
      &\iff (X\leq 0\land\Phi(0))\lor (X>0\land\Phi(X)).
    \end{alignat*}
    For a formula depending on several real variables, this rewriting yields
    \begin{alignat*}1
            &\kern-3em\Phi(\max(0,X_1),\max(0,X_2),\max(0,X_3),\max(0,X_4))\\
       \iff \bigl(&X_1\leq 0\land X_2\leq 0\land X_3\leq 0 \land X_4\leq 0\land \Phi(0,0,0,0)\\
       {}\lor{} &X_1> 0\land X_2\leq 0\land X_3\leq 0 \land X_4\leq 0\land \Phi(X_1,0,0,0)\\
       {}\lor{} &X_1\leq 0\land X_2> 0\land X_3\leq 0 \land X_4\leq 0\land \Phi(0,X_2,0,0)\\
       {}\lor{} &X_1> 0\land X_2> 0\land X_3\leq 0 \land X_4\leq 0\land \Phi(X_1,X_2,0,0)\\
            &\vdots\\
       {}\lor{} &X_1> 0\land X_2> 0\land X_3> 0 \land X_4> 0\land \Phi(X_1,X_2,X_3,X_4)\bigr).
    \end{alignat*}

    Applying these considerations to our problem, we first put
    \begin{align*}
    X_1&=(1-\lambda)ux+\lambda(u+x-1),\\
    X_2&=(1-\lambda)vy+\lambda(v+y-1),\\
    X_3&=(1-\mu)uv+\mu(u+v-1),\\
    X_4&=(1-\mu)xy+\mu(x+y-1),
    \end{align*}
    so that
    \begin{align*}
    A&=(1-\lambda)\max(0,X_3)\max(0,X_4)+\lambda(\max(0,X_3)+\max(0,X_4)-1),\\
    B&=(1-\mu)\max(0,X_1)\max(0,X_2)+\mu(\max(0,X_1)+\max(0,X_2)-1),
    \end{align*}

    and then arrive at the following equivalent formulation of our problem
    \begin{alignat*}1
      &\forall\ x,y,u,v\in\mathbb{R}:
       0<\lambda<\mu<\infty\land 0<x<1\land 0<y<1\land 0<u<1\land 0<v<1\\
      &\Rightarrow\Bigl(\bigl( X_1 \leq 0 \land X_2 \leq 0 \land (1-\mu)0\,0+\mu(0+0-1)\leq 0\\
      &\quad\;\lor X_1 > 0 \land X_2 \leq 0 \land (1-\mu)X_1\,0+\mu(X_1+0-1)\leq 0\\
      &\quad\;\lor X_1 \leq 0 \land X_2 > 0 \land (1-\mu)0\,X_2+\mu(0+X_2-1)\leq 0\\
      &\quad\;\lor X_1 > 0 \land X_2 > 0 \land (1-\mu)X_1X_2+\mu(X_1+X_2-1)\leq 0\bigr)\\
      &\;\;\lor\bigl(X_1\leq0\land X_2\leq 0\land X_3\leq 0\land X_4\leq 0\\
      &\qquad\quad \land (1-\lambda)0\,0+\lambda (0+0-1)\geq(1-\mu)0\,0+\mu(0+0-1)>0\\
      &\quad\;\lor X_1>0\land X_2\leq 0\land X_3\leq 0\land X_4\leq 0\\
      &\qquad\quad \land (1-\lambda)0\,0+\lambda (0+0-1)\geq(1-\mu)X_1\,0+\mu(X_1+0-1)>0\\
      &\quad\;\lor \cdots\\
      &\quad\;\lor X_1>0\land X_2>0\land X_3>0\land X_4>0\\
      &\qquad\quad \land (1-\lambda)X_3X_4 +\lambda(X_3+X_4-1)\geq(1-\mu)X_1X_2+\mu(X_1+X_2-1)>0
      \bigr)\Bigr)
      \rlap{\smash{\raisebox{9\baselineskip}{$\scriptstyle\left.\rule{0pt}{2.25\baselineskip}\right\}B\leq0$}}}
      \smash{\raisebox{3\baselineskip}{$\scriptstyle\left.\rule{0pt}{3.75\baselineskip}\right\}A\geq B>0$}}.
    \end{alignat*}

  \item \emph{Discard redundant clauses.}
    The number of clauses in this last formula can be reduced considerably.
    For example, from
    \[
      0<\lambda<\mu<\infty\land 0<x<1\land 0<y<1\land 0<u<1\land 0<v<1
      \land X_1\leq 0\land X_2\leq 0
      \]
    it follows that $(1-\mu)0\,0+\mu(0+0-1)=-\mu\leq 0$ is trivially true, so that
    the first clause simplifies to $X_1\leq 0\land X_2\leq 0$.

    Similarly, the second and the third clause simplify to $X_1>0\land X_2\leq0$ and $X_1\leq0\land X_2>0$, respectively. (CAD computations
    confirm these assertions quickly.) The last literals of the first three clauses dropped, we can
    simplify the first four clauses, corresponding to $B\leq0$, to
    \begin{alignat*}1
      &(X_1\leq 0\land X_2\leq 0)\lor
       (X_1> 0\land X_2\leq0)\lor
       (X_1\leq 0\land X_2>0)\\
      &\qquad \lor(X_1>0\land X_2>0\land (1-\mu)X_1X_2+\mu(X_1+X_2-1)\leq 0)\\
     &\iff \neg (X_1>0 \land X_2>0)\lor (X_1>0\land X_2>0\land (1-\mu)X_1X_2+\mu(X_1+X_2-1)\leq 0)\\
      &\iff X_1\leq 0 \lor X_2\leq 0 \lor (1-\mu)X_1X_2+\mu(X_1+X_2-1)\leq 0.
    \end{alignat*}
    The simplification of the remaining 16 clauses is complementary: here, all but the last simplify
    to false, and thus these clauses can be dropped altogether. For verifying that a clause
    \begin{alignat*}1
      &0<\lambda<\mu<\infty\land 0<x<1\land 0<y<1\land 0<u<1\land 0<v<1\\
      &\quad\land X_1\diamond 0\land X_2\diamond 0\land X_3\diamond 0\land X_4\diamond 0
       \land A\geq B> 0,
    \end{alignat*}
    is unsatisfiable, with $\diamond$ denoting either $\leq$ or $>$, it is sufficient to show unsatisfiability of the clause with $A\geq B>0$
    replaced by the weaker conditions $A\geq0$ or $B\geq0$. For 15 of the 16 clauses, a CAD computation
    quickly yields false for at least one of these two choices.
    The only surviving clause is the one corresponding to $X_1>0\land X_2>0\land X_3>0\land X_4>0$.
    Dropping all the others and taking into account also the simplified form of the first four
    clauses, we arrive at the equivalent formulation
    \begin{alignat*}1
      &\forall\ x,y,u,v\in\mathbb{R}:0<\lambda<\mu<\infty\land 0<x<1\land 0<y<1\land 0<u<1\land 0<v<1\\
      &\Rightarrow\bigl( X_1 \leq 0\lor X_2\leq 0
      \lor (1-\mu)X_1X_2+\mu(X_1+X_2-1)\leq 0\\
      &\qquad{}\lor X_1>0\land X_2>0\land X_3>0\land X_4>0\\
      &\qquad\qquad{}\land
              (1-\lambda)X_3X_4 +\lambda(X_3+X_4-1)\geq(1-\mu)X_1X_2+\mu(X_1+X_2-1)>0\bigr).
    \end{alignat*}

  \item \emph{Apply some logical simplification.}
    First of all, we may drop the conditions $X_1>0$ and $X_2>0$ from the last clause because
    $X_1\leq0$ and $X_2\leq0$ are part of the disjunction.
    Furthermore, because of
    \begin{alignat*}1
      &0<\lambda<\mu<\infty\land 0<x<1\land 0<y<1\land 0<u<1\land 0<v<1\\
        &\quad{}\land (1-\mu)X_1 X_2+\mu(X_1+X_2-1)>0\Rightarrow X_i>0 \qquad (i=1,2,3,4)
    \end{alignat*}
    as confirmed by a CAD computation within a few minutes,
    also the parts $X_3>0$ and $X_4>0$ in the last clause are redundant and can be dropped.
    Moreover, $(1-\mu)X_1 X_2+\mu(X_1+X_2-1)>0\Rightarrow X_i>0$ ($i=1,2$) is equivalent to
     $X_i\leq 0\Rightarrow (1-\mu)X_1X_2+\mu(X_1+X_2-1)\leq 0$ ($i=1,2$)
    which allows us to discard $X_1\leq0$ and $X_2\leq 0$ from the disjunction.

    Dropping also the $>0$ at the very end of the last clause, which is allowed because
    $(1-\mu)X_1X_2+\mu(X_1+X_2-1)\leq0$ appears in the disjunction, we arrive at the equivalent
    formulation
    \begin{alignat*}1
      &\forall\ x,y,u,v\in\mathbb{R}:0<\lambda<\mu<\infty\land 0<x<1\land 0<y<1\land 0<u<1\land 0<v<1\\
      &\quad\Rightarrow\bigl( (1-\mu)X_1X_2+\mu(X_1+X_2-1)\leq 0\\
      &\qquad{}\lor
              (1-\lambda)X_3X_4 +\lambda(X_3+X_4-1)\geq(1-\mu)X_1X_2+\mu(X_1+X_2-1)\bigr).
    \end{alignat*}
  \item \emph{Apply some algebraic simplification.} In terms of $x,y,u,v$ we have for the second inequality
    \begin{alignat*}1
      &\bigl((1-\lambda)X_3X_4+\lambda(X_3+X_4-1)\bigr)-\bigl((1-\mu)X_1X_2+\mu(X_1+X_2-1)\bigr)\\
      &{}=(\mu-\lambda) \bigl((\mu+\lambda (1-\mu)) (u-1) (v-1) (x-1) (y-1) \\
      &{}\qquad         -((u-1) y-u) ((v-1) x-v)+1\bigr)\geq 0,
    \end{alignat*}
    from which the factor $(\mu-\lambda)$ can be discarded because $0<\lambda<\mu<\infty$ is part of the
    assumptions. Doing in addition the substitutions $x\mapsto 1-x$, $y\mapsto 1-y$, $u\mapsto 1-u$,
    $v\mapsto 1-v$, the last inequality becomes
    \[
      u y + v x (1 - (1-\mu)(1-\lambda) u y)\geq0.
    \]
    The substitutions leave the conditions $0<x<1$, $0<y<1$, $0<u<1$, $0<v<1$ invariant
    but turn the first inequality $(1-\mu)X_1X_2+\mu(X_1+X_2-1)\leq 0$ into
    \begin{alignat*}1
      &u ((\lambda -1) x+1) ((\mu -1) ((\lambda -1) v y+v+y)+1)\\
      &\quad{}+(\mu -1) x ((\lambda -1) v y+v+y)+((\lambda -1) v y+v+y)+x-1 \geq0.
    \end{alignat*}
    This can be simplified further by replacing the subexpression $(\lambda-1)vy+v+y$
    by a new variable~$\tilde{v}$, viz.\ by making the additional substitution $v\mapsto(\tilde{v}-y)/(1+(\lambda-1)y)$.
    This turns the first inequality into
    \[
      u ((\lambda -1) x+1) ((\mu -1) \tilde{v}+1)+(\mu -1) \tilde{v} x+\tilde{v}+x-1\geq0
    \]
    and the second into
    \[
      \frac{\tilde{v} x (1-(\lambda -1) (\mu -1) u y)+y ((\lambda -1) u y ((\mu -1) x+1)+u-x)}{(\lambda
    -1) y+1}\geq0
    \]
    The denominator can be cleared because $(\lambda-1)y+1>0$ is a consequence of the assumptions.
    The substitution also turns the condition $0<v<1$ into $y<\tilde{v}<1+\lambda y$.
    Putting things together, we arrive at the equivalent formulation
    \begin{alignat*}1
      &\forall\ x,y,u,\tilde{v}:0<\lambda<\mu<\infty\land 0<x<1\land 0<y<1\land 0<u<1\land y<\tilde{v}<1+\lambda y\\
      &\quad\Rightarrow\bigl( u ((\lambda -1) x+1) ((\mu -1) \tilde{v}+1)+(\mu -1) \tilde{v} x+\tilde{v}+x-1\geq0\\
      &\qquad{}\lor
         \tilde{v} x (1-(\lambda -1) (\mu -1) u y)+y ((\lambda -1) u y ((\mu -1) x+1)+u-x)\geq0\bigr).
    \end{alignat*}
  \end{enumerate}

  With this last formulation, the quantifier elimination problem can be completed automatically
  within a reasonable amount of time, at least if it is properly input. The order of the quantifiers,
  while logically irrelevant, has a dramatic influence on the runtime. We found that a feasible
  order is $\mu,\lambda,u,y,\tilde{v},x$.
  Mathematica's command Resolve unfortunately reorders the quantifiers internally,
  in this case not to the advantage of the performance. So we have to do the elimination by
  resorting to the low-level CAD command. It is also advantageous to consider the negation of the
  whole formula, and then taking the complement of the result to obtain the desired region for
  $\mu,\lambda$. We thus consider the quantified formula
    \begin{alignat*}1
      &\exists\ x,y,u,\tilde{v}:0<\lambda<\mu<\infty\land 0<x<1\land 0<y<1\land 0<u<1\land y<\tilde{v}<1+\lambda y\\
      &\quad\land u ((\lambda -1) x+1) ((\mu -1) \tilde{v}+1)+(\mu -1) \tilde{v} x+\tilde{v}+x-1<0\\
      &\quad\land
         \tilde{v} x (1-(\lambda -1) (\mu -1) u y)+y ((\lambda -1) u y ((\mu -1) x+1)+u-x)<0.
    \end{alignat*}
  To further improve the performance, we consider the cases $0<\lambda\leq 1$ and $\lambda>1$
  separately. For $0<\lambda\leq1$, the body of the existentially quantified formula is
  unsatisfiable, even when the first big inequality is dropped: A CAD computation quickly asserts
  that
  \begin{alignat*}1
     &0<\lambda<1 \land \lambda<\mu<\infty\land 0<x<1\land 0<y<1\land 0<u<1\land y<\tilde{v}<1+\lambda y\\
     &\quad\land \tilde{v} x (1-(\lambda -1) (\mu -1) u y)+y ((\lambda -1) u y ((\mu -1) x+1)+u-x)<0
  \end{alignat*}
  is equivalent to false. For $\lambda>1$, we proceed in two steps.
  First we compute a CAD only for
  \begin{alignat*}1
    &1<\lambda<\mu<\infty\land 0<x<1\land 0<y<1\land 0<u<1\land y<\tilde{v}<1+\lambda y\\
    &\quad\land u ((\lambda -1) x+1) ((\mu -1) \tilde{v}+1)+(\mu -1) \tilde{v} x+\tilde{v}+x-1<0.
  \end{alignat*}
  This takes about a minute and then gives something which is trivially equivalent to
  \begin{alignat*}1
    &1<\lambda<\mu<\infty \land 0<u<1
    \land 0<y<\tilde{v}<\frac{1-u}{1+(\mu-1)u}
    \land 0<x<\frac{1-u-\tilde{v}-(\mu-1)u\tilde{v}}{(1+(\lambda-1)u)(1+(\mu-1)\tilde{v})}.
  \end{alignat*}
  Denoting this latter formula by $\Phi$, we then compute the CAD of
  \[
    \Phi\land \tilde{v} x (1-(\lambda -1) (\mu -1) u y)+y ((\lambda -1) u y ((\mu -1) x+1)+u-x)<0.
  \]
  This takes about three minutes and then returns
  \begin{alignat*}1
    \mu > 17+12\sqrt2\land \Bigl(\frac{1-3\sqrt\mu}{3-\sqrt\mu}\Bigr)^2<\lambda<\mu<\infty\land(\ldots)
  \end{alignat*}
  where $(\ldots)$ is some messy formula involving $u,y,\tilde{v},x$. The specification of the CAD
  algorithm implies now that the existentially quantified formula above is valid if and only
  if $\mu$ and $\lambda$ satisfy this formula with the $(\ldots)$ part removed.
  Intersecting the complement of this region with the region where $0<\lambda<\mu<\infty$
  (another quick CAD computation), we finally
  obtain
  \[
    \Bigl(0<\mu\leq17+12\sqrt2\land0<\lambda<\mu<\infty\Bigr)\lor\Bigl(\mu>17+12\sqrt2\land0<\lambda\leq\Bigl(\frac{1-3\sqrt\mu}{3-\sqrt\mu}\Bigr)^2\Bigr)
  \]
  as claimed in the beginning.

\section{Discussion of results}\label{sec:disc}
\subsection{Equivalent results}
It is interesting to see that Theorem~\ref{thm:main1} can be expressed in the following equivalent way. This equivalent result shows that the partial results obtained by the sufficient conditions related to the generalized Mulholland inequality as displayed in Proposition~\ref{cor:suffcond} already covered the first four conditions. Condition (v) now closes the missing gap for a full characterization of dominance between two Sugeno-Weber t-norms.

\begin{theorem}\label{thm:main2}
Consider the family of Sugeno-Weber t-norms $(\TSW{\lambda})_{\lambda\in\cint{0,\infty}}$. Then, for all $\lambda,\mu\in\cint{0,\infty}$, $\TSW{\lambda}$ dominates $\TSW{\mu}$ if and only if one of the following conditions holds:
\begin{ilist}
\item $\lambda=0$,
\item $\mu=\infty$,
\item $\lambda=\mu$,
\item $0 < \lambda<\min (\mu,1)$,
\item $0 < \lambda < \mu$ and $1+\sqrt{\lambda\mu}\leq 3(\sqrt\lambda+\sqrt\mu)$.
\end{ilist}
\end{theorem}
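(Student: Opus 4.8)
The plan is to establish the equivalence between Theorem~\ref{thm:main1} and Theorem~\ref{thm:main2} purely by elementary algebraic manipulation, since both characterize the same relation $\TSW{\lambda}\gg\TSW{\mu}$. Conditions (i)--(iii) are literally identical in both statements, so no work is needed there. The entire task reduces to showing that the disjunction of conditions (iv) and (v) in Theorem~\ref{thm:main1} describes exactly the same subset of $\opint{0,\infty}^2$ (restricted to $0<\lambda<\mu$) as the disjunction of conditions (iv) and (v) in Theorem~\ref{thm:main2}.

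First I would fix the regime $0<\lambda<\mu<\infty$ and isolate the nontrivial inequality in the new statement, namely condition (v): $1+\sqrt{\lambda\mu}\leq 3(\sqrt\lambda+\sqrt\mu)$. The natural move is to substitute $s=\sqrt\lambda$ and $t=\sqrt\mu$ with $0<s<t$, turning this into the polynomial inequality $1+st\leq 3(s+t)$, i.e.\ $1+st-3s-3t\leq0$. I would then show that in the range $0<s<t$ this single inequality together with the threshold $\mu\leq\mr$ reproduces the casewise description in Theorem~\ref{thm:main1}. Concretely, I expect to verify two things: that whenever $0<\mu\leq\mr$ the inequality $1+\sqrt{\lambda\mu}\leq3(\sqrt\lambda+\sqrt\mu)$ holds automatically for all $0<\lambda<\mu$ (so that Theorem~\ref{thm:main1}(iv) is subsumed), and that when $\mu>\mr$ the inequality $1+\sqrt{\lambda\mu}\leq3(\sqrt\lambda+\sqrt\mu)$ is equivalent to the explicit bound $\lambda\leq\bigl(\tfrac{1-3\sqrt\mu}{3-\sqrt\mu}\bigr)^2$ from Theorem~\ref{thm:main1}(v).

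The key algebraic step is solving $1+st\leq3(s+t)$ for $s$ as a function of $t$. Treating it as linear in $s$ gives $s(t-3)\leq 3t-1$. When $t<3$ (equivalently $\mu<9$) the coefficient $t-3$ is negative, so the constraint reverses and turns out to impose nothing beyond $0<\lambda<\mu$; when $t>3$ (equivalently $\mu>9$) one gets $s\leq\frac{3t-1}{t-3}$, and squaring yields $\lambda\leq\bigl(\frac{3\sqrt\mu-1}{\sqrt\mu-3}\bigr)^2=\bigl(\frac{1-3\sqrt\mu}{3-\sqrt\mu}\bigr)^2$, matching the expression in Theorem~\ref{thm:main1}(v). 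The threshold $\mr$ should emerge as the value of $\mu$ at which this $\lambda$-bound first drops below $\mu$ itself; setting $\lambda=\mu$ in $1+\mu=3\cdot2\sqrt\mu$ gives $\mu-6\sqrt\mu+1=0$, hence $\sqrt\mu=3+2\sqrt2$ and $\mu=\mr$, confirming the crossover. The bookkeeping near the boundaries---distinguishing the cases $\mu<9$, $9\leq\mu\leq\mr$, and $\mu>\mr$, and checking that strict versus non-strict inequalities line up---is the only real obstacle, but each case is a routine sign analysis of a quadratic in $\sqrt\mu$. Alternatively, one could bypass this entirely by simply feeding both conditions to CAD and verifying their logical equivalence directly, which is the pragmatic route consistent with the rest of the paper.
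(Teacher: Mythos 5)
There is a genuine gap in the central step of your plan. You propose to verify that whenever $0<\mu\leq 17+12\sqrt2$, the inequality $1+\sqrt{\lambda\mu}\leq 3(\sqrt\lambda+\sqrt\mu)$ holds automatically for all $0<\lambda<\mu$, so that condition (iv) of Theorem~\ref{thm:main1} would be subsumed by condition (v) of Theorem~\ref{thm:main2} alone. This is false: take $\lambda=10^{-4}$ and $\mu=10^{-2}$; then $1+\sqrt{\lambda\mu}=1.001$ while $3(\sqrt\lambda+\sqrt\mu)=0.33$. The flaw is in your sign analysis for $t=\sqrt\mu<3$: dividing $s(t-3)\leq 3t-1$ by the negative quantity $t-3$ yields $s\geq\frac{1-3t}{3-t}$, and this reversed constraint is vacuous only when $\frac{1-3t}{3-t}\leq 0$, i.e.\ when $t\geq\frac13$. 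For $\mu<\frac19$ it is a genuine lower bound on $\sqrt\lambda$, violated by all sufficiently small $\lambda$. Consequently condition (v) of Theorem~\ref{thm:main2} does \emph{not} subsume condition (iv) of Theorem~\ref{thm:main1}, and condition (iv) of Theorem~\ref{thm:main2} --- which reads $\lambda<\min(\mu,1)$ and is \emph{not} identical to condition (iv) of Theorem~\ref{thm:main1} --- is not redundant: it is precisely what covers the region your argument loses.

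The repair stays within your elementary framework. For the direction from Theorem~\ref{thm:main1} to Theorem~\ref{thm:main2}, split the case $0<\lambda<\mu\leq 17+12\sqrt2$ according to $\lambda<1$ or $\lambda\geq1$: if $\lambda<1$, then condition (iv) of Theorem~\ref{thm:main2} holds outright; if $\lambda\geq1$, then with $1\leq s<t\leq 3+2\sqrt2$ the function $g(s,t)=1+st-3s-3t$ is affine in $s$, so $g(s,t)\leq\max\bigl(g(1,t),g(t,t)\bigr)$, where $g(1,t)=-2-2t<0$ and $g(t,t)=t^2-6t+1\leq0$ exactly for $t\leq 3+2\sqrt2$; hence condition (v) of Theorem~\ref{thm:main2} holds. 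For the converse, note that condition (iv) of Theorem~\ref{thm:main2} together with $\mu>17+12\sqrt2$ gives $\sqrt\lambda<1<\frac{3\sqrt\mu-1}{\sqrt\mu-3}$, which implies the bound in Theorem~\ref{thm:main1}~(v); your treatment of the case $\mu>17+12\sqrt2$ (solving for $s$, squaring) and your fixed-point computation identifying the threshold $17+12\sqrt2$ are correct and can be kept. Your fallback of verifying the equivalence by a single CAD call would also succeed and is closest in spirit to the paper, which states Theorem~\ref{thm:main2} without a separate proof, as an equivalent reformulation of Theorem~\ref{thm:main1}.
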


\subsection{Solution sets and their properties}
It is further worth to look a bit in more detail at the relationship between the parameters of t-norms $T_\beta$ being dominated by some $T_\alpha$ for some given $\alpha$ of a parametric family of t-norms $(T_\lambda)_{\lambda\in I}$. Figure~\ref{fig:solutionset} visualizes the set of all pairs of parameters $(\lambda,\mu)$ such that $\TSW{\lambda}$ dominates $\TSW{\mu}$. We call such a set \emph{solution set} $\mathcal{S}$, i.e., $\mathcal{S}=\{(\lambda,\mu)\mid T_\lambda\gg T_\mu\}$. In a completely analogous way we have illustrated the solution sets for the parametric families of t-norms as summarized in Table~\ref{tab:families}. The results are displayed in Figure~\ref{fig:solutionsets} and it is immediately obvious that the solution set of the family of Sugeno-Weber t-norms is much more complex than the ones for the other families. Note that for the other families we even do have nice Hasse diagrams whereas for the family of Sugeno-Weber t-norms a nice graphic is, at least so far, still missing.

\begin{figure}[t]
\begin{center}
\begin{tabular}{ccc}
\includegraphics[width=0.3\textwidth]{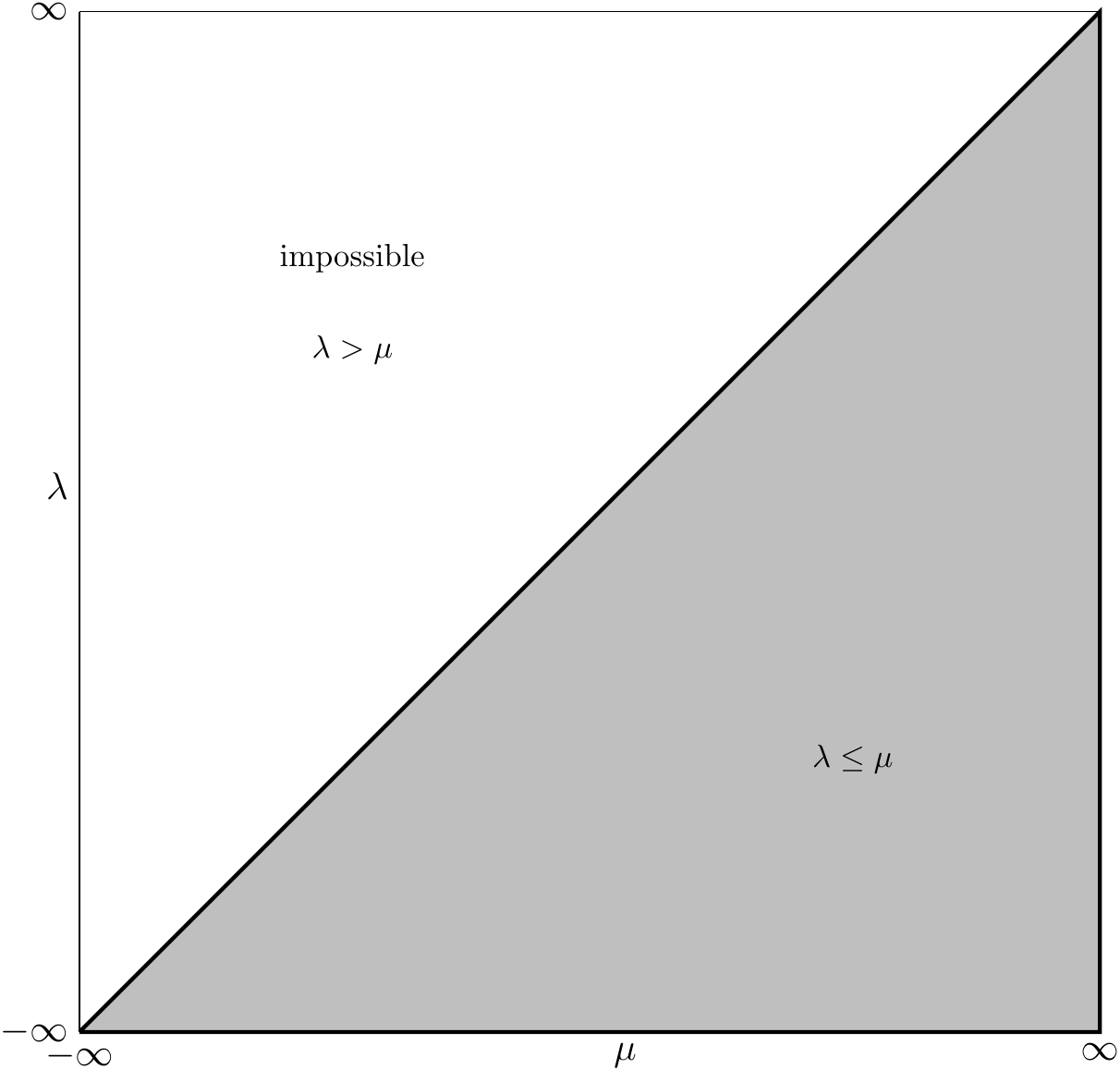}&
\includegraphics[width=0.3\textwidth]{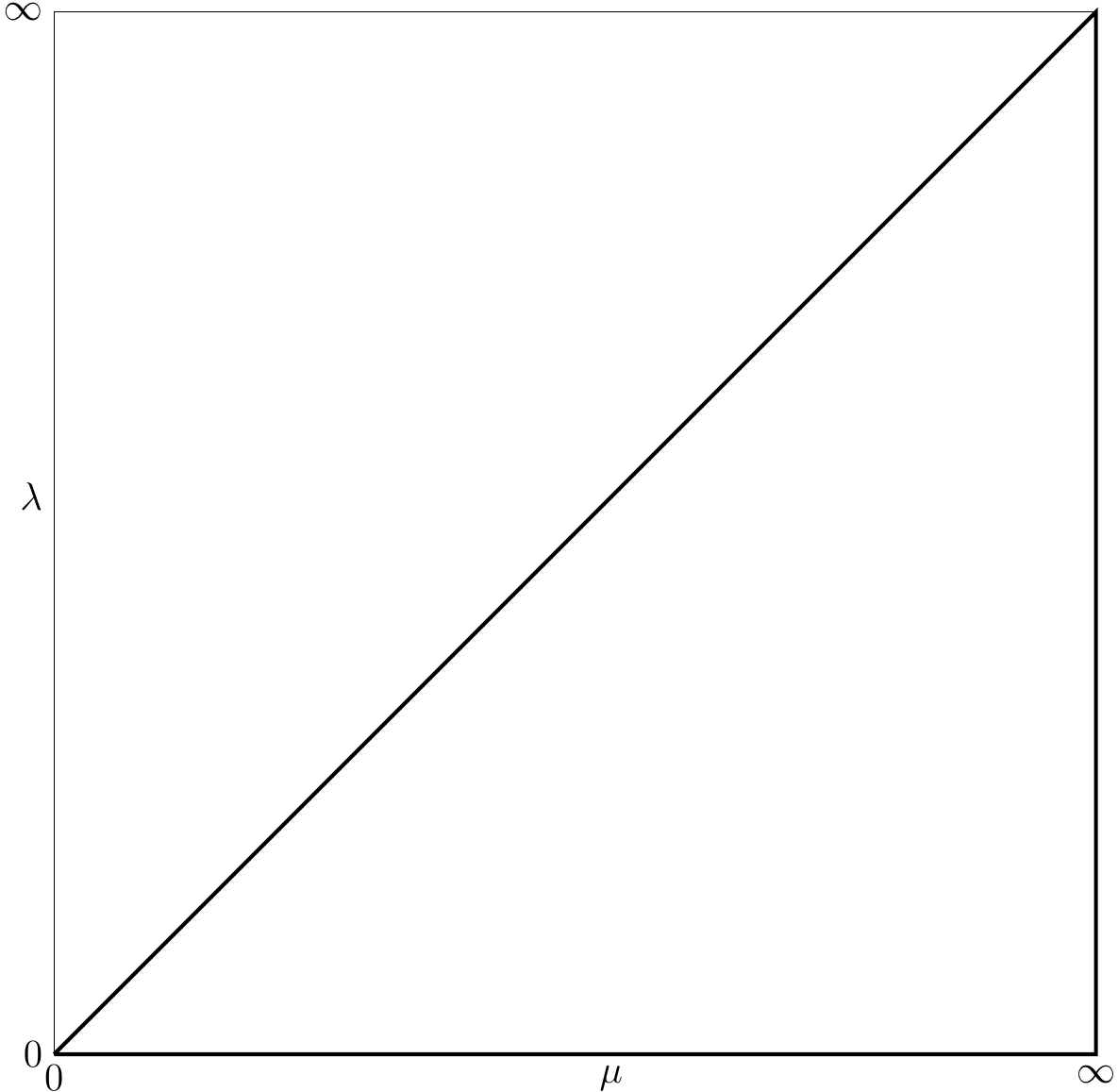}&
\includegraphics[width=0.29\textwidth]{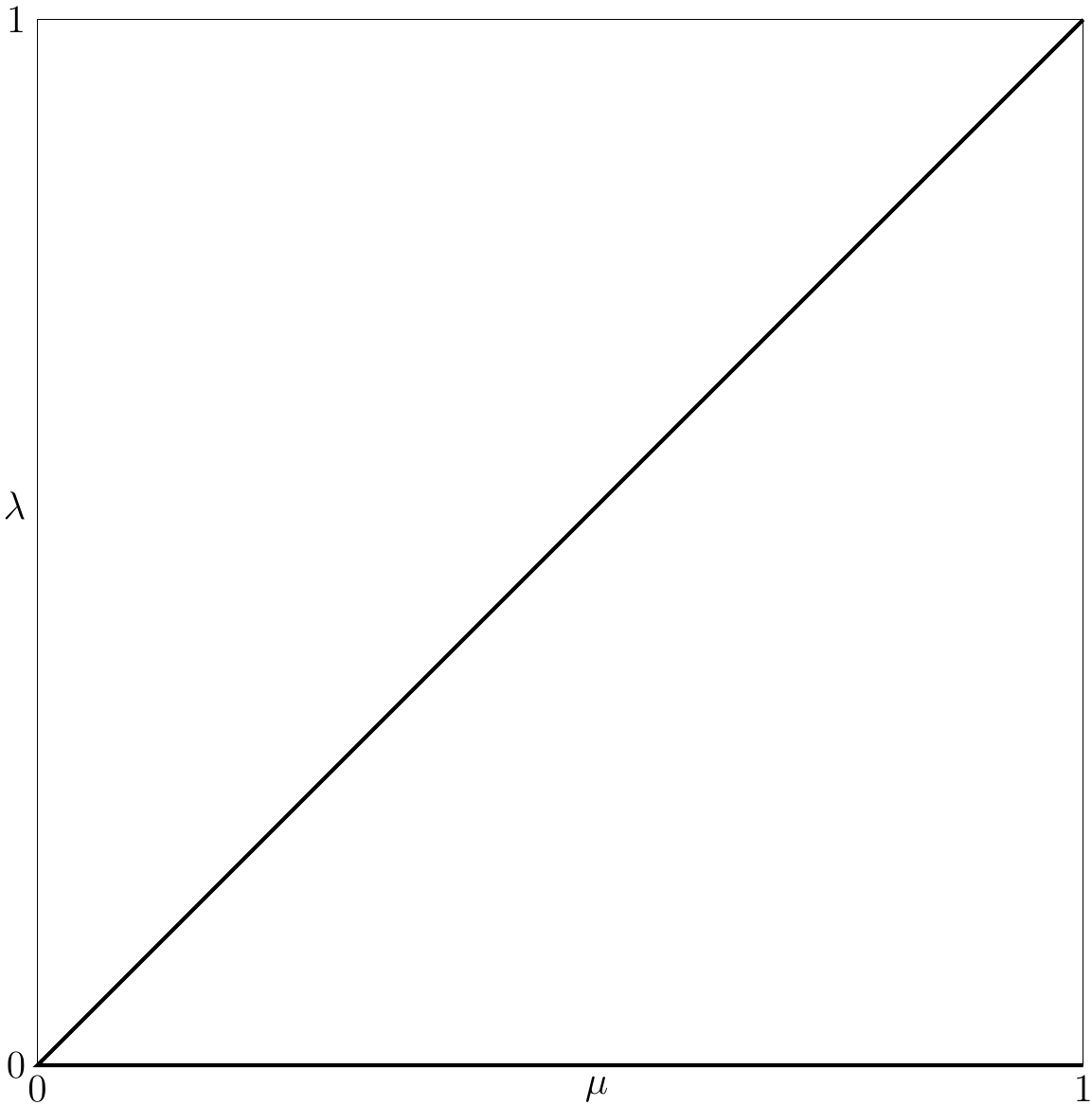}\\
Schweizer-Sklar t-norms & Frank, Hamacher t-norms& Mayor-Torrens, Dubois-Prade t-norms\\[1ex]
\includegraphics[width=0.3\textwidth]{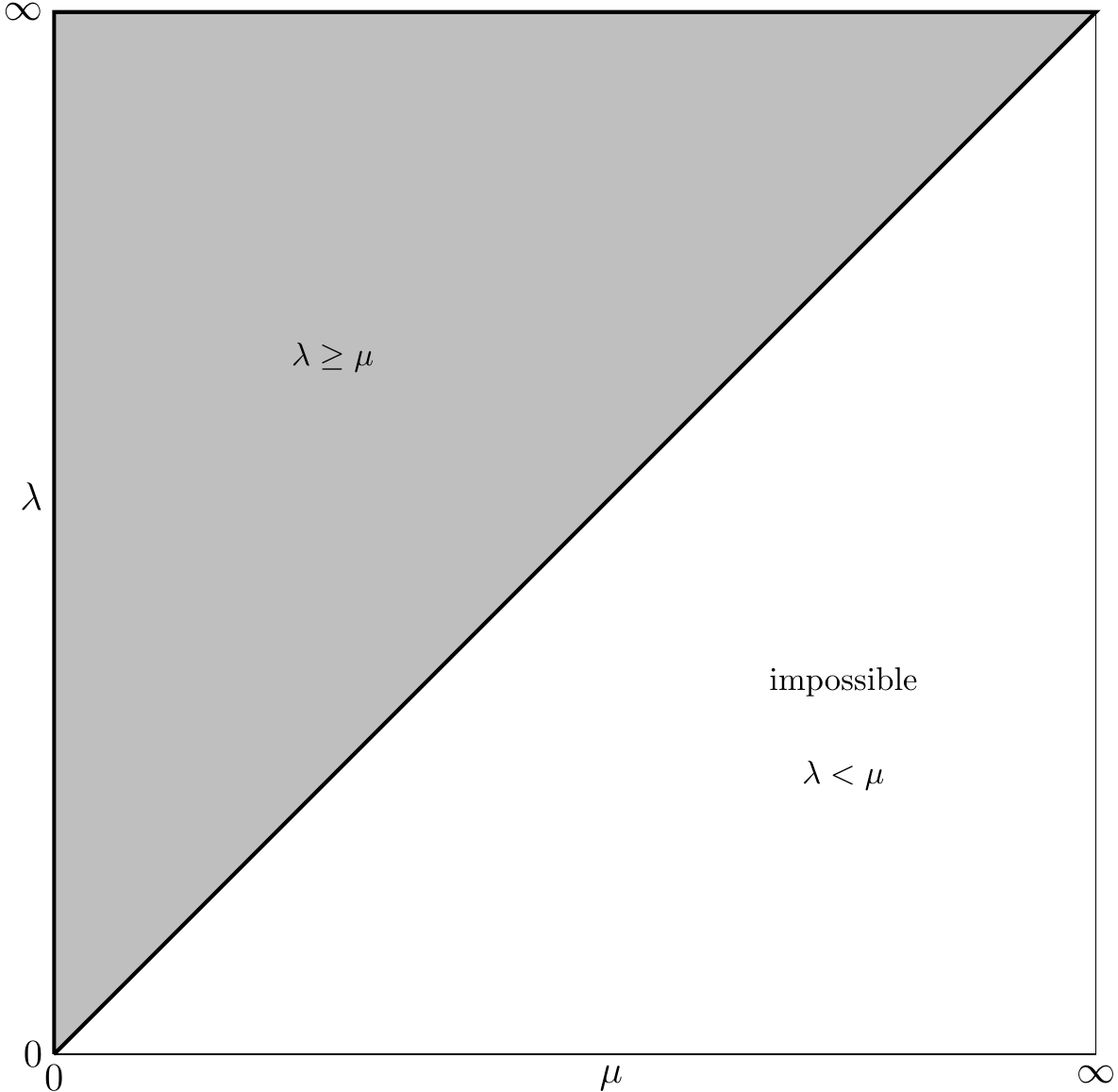}&
\includegraphics[width=0.3\textwidth]{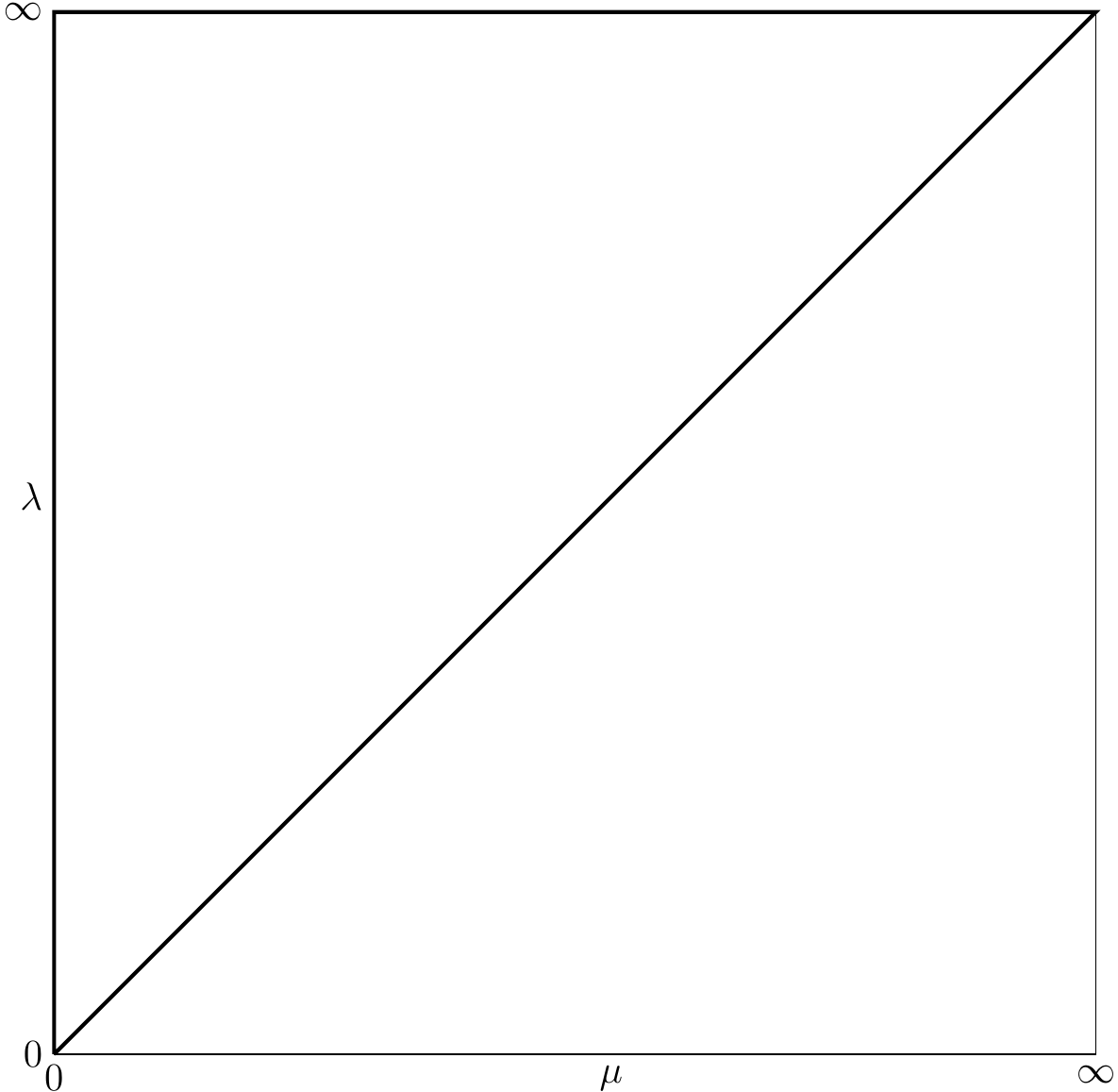}&
\includegraphics[width=0.33\textwidth]{fig_solutionset_result}\\
Acz\'{e}l-Alsina t-norms and others& $T^{\mathbf{9}}$ & Sugeno-Weber t-norms
\end{tabular}
\end{center}
\caption{Schematics for the solution sets for the parametric families of t-norms displayed in Tab.~\ref{tab:families} and for the family of Sugeno-Weber t-norms.    }
\label{fig:solutionsets}
\end{figure}

Therefore we inspect the solution set a bit in more detail. The following function is important for the description of the solution set, so that we briefly discuss its properties:

\begin{corollary}
Consider the function $f\colon\opint{9,\infty}\to \mathbb{R}$ defined, for all $x\in\opint{9,\infty}$, by
\begin{equation}\label{eq:deff}
f(x):=\left(\frac{1-3\sqrt x}{3-\sqrt x}\right)^2.
\end{equation}
Then $f$ is strictly decreasing. Moreover, $f$ is involutive, i.e., for all $x\in\opint{9,\infty}$ we have $f(f(x))=x$.
\end{corollary}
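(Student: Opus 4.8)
The plan is to prove the two claims separately, starting with the involution property since establishing $f(f(x))=x$ essentially forces the monotonicity to be decreasing once we locate the behaviour at a single reference point. I would first analyse the domain: for $x\in\opint{9,\infty}$ we have $\sqrt x>3$, so the denominator $3-\sqrt x$ is strictly negative and never vanishes, hence $f$ is well-defined and smooth on the whole interval. To make the algebra tractable, I would introduce the substitution $t=\sqrt x$, so that $t\in\opint{3,\infty}$ and $f$ is expressed through the auxiliary map
\[
  g(t)=\frac{1-3t}{3-t},
\]
with $f(x)=g(\sqrt x)^2$. The point of this substitution is that $g$ is a Möbius (linear fractional) transformation, whose composition behaviour is governed by $2\times 2$ matrix multiplication, and Möbius maps are exactly the setting where involutivity is easy to detect.

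For the involution claim, the key observation is that the matrix $\begin{pmatrix}-3&1\\-1&3\end{pmatrix}$ associated with $g$ has trace zero, which is precisely the algebraic condition for a Möbius transformation to be an involution. Concretely, I would simply compute $g(g(t))$ and verify it simplifies to $t$; the trace-zero remark explains \emph{why} this happens but the direct computation is the cleanest proof. Then $f(f(x))=g\bigl(\sqrt{g(\sqrt x)^2}\,\bigr)^2=g\bigl(|g(\sqrt x)|\bigr)^2$, so the one genuine subtlety is the absolute value: I must confirm that for $t\in\opint{3,\infty}$ the quantity $g(t)$ is positive, so that $\sqrt{g(t)^2}=g(t)$ and the square root does not introduce a sign error. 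Since $1-3t<0$ and $3-t<0$ for $t>3$, the ratio $g(t)$ is indeed positive, and moreover one checks $g(t)>3$ as well (equivalently $g$ maps $\opint{3,\infty}$ into itself), so the outer $g$ is evaluated in its own domain and the whole composition closes up to give $g(g(\sqrt x))^2=(\sqrt x)^2=x$.

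For strict monotonicity, I would compute the derivative. Either differentiate $f$ directly, or use $f(x)=g(\sqrt x)^2$ together with $g'(t)=\dfrac{8}{(3-t)^2}>0$ and the chain rule: $f'(x)=2\,g(\sqrt x)\,g'(\sqrt x)\cdot\frac{1}{2\sqrt x}$. On $\opint{9,\infty}$ we have $\sqrt x>0$ and, as noted, $g(\sqrt x)>0$, while $g'(\sqrt x)>0$; but the chain rule factor from differentiating $g(\sqrt x)^2$ must be tracked carefully, because $g$ is increasing on this interval whereas $f$ is claimed to be decreasing. Resolving this apparent tension is the main thing to get right: $g$ maps $3$ to $+\infty$ and $+\infty$ to $3$ on the relevant branch, so although $g$ has positive derivative it sends large $x$ to small $g$-values, and squaring preserves the decreasing behaviour. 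The cleanest route is therefore to just expand $f(x)=\dfrac{(1-3\sqrt x)^2}{(3-\sqrt x)^2}$ and show $f'(x)<0$ throughout, which reduces to checking the sign of a single polynomial factor in $\sqrt x$ over $\sqrt x>3$.

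The main obstacle I anticipate is purely bookkeeping around signs and the square root: the squaring in the definition of $f$ means one must constantly verify that the quantities being square-rooted are positive on $\opint{9,\infty}$, and that the inner map $g$ keeps its argument inside the domain, so that the involution identity for $g$ transfers faithfully to $f$. Once those positivity facts are pinned down, both the monotonicity and the involution follow by routine computation. An even slicker alternative, which I would mention, is to observe that strict monotonicity follows immediately from involutivity plus continuity: an involutive continuous map on an interval is automatically a decreasing bijection (since an increasing involution on an interval must be the identity), so one could derive the decreasing property from $f\circ f=\id$ at essentially no extra cost.
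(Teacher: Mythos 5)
Your treatment of the involution is correct and in fact more careful than the paper's: the paper merely asserts that ``by simple computations it can be easily verified'' that $f(f(x))=x$, whereas you isolate exactly the two points that make the verification legitimate --- that $g(t)=\frac{1-3t}{3-t}$ satisfies $g(g(t))=t$ (the trace-zero M\"obius observation), and that $g$ maps $\opint{3,\infty}$ into itself with $g(t)>3>0$, so that $\sqrt{g(\sqrt x)^2}=g(\sqrt x)$ and the outer application stays inside the domain. That part stands.

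The monotonicity argument, however, contains a genuine sign error that propagates into incoherence. The quotient rule gives
\[
  g'(t)=\frac{(-3)(3-t)-(1-3t)(-1)}{(3-t)^2}=\frac{-8}{(3-t)^2}<0,
\]
not $+8/(3-t)^2>0$ as you claim: $g$ is strictly \emph{decreasing} on $\opint{3,\infty}$, which is exactly what its boundary behaviour ($g(t)\to+\infty$ as $t\to3^+$, $g(t)\to 3$ as $t\to\infty$) forces. Your attempted resolution of the ``apparent tension'' --- that $g$ has positive derivative yet sends large inputs to small outputs --- describes an impossible function; no amount of bookkeeping rescues that sentence. Once the sign is corrected the tension evaporates and your own chain rule finishes immediately: $f'(x)=g(\sqrt x)\,g'(\sqrt x)/\sqrt x<0$ because $g(\sqrt x)>0$ and $g'(\sqrt x)<0$. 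Alternatively, your closing remark is a complete and valid repair on its own: $f$ is a continuous involution on an interval, hence an injective continuous map and so strictly monotone, and a strictly increasing involution must be the identity (if $f(x)>x$ then $x=f(f(x))>f(x)>x$, and symmetrically for $f(x)<x$), which $f$ is not (e.g.\ $f(16)=121$); hence $f$ is strictly decreasing. Either fixed version is a genuinely different route from the paper, whose proof goes the other way around: it computes $f'(x)=\frac{-8(1-3\sqrt x)}{\sqrt x\,(3-\sqrt x)^3}$ directly, reads off its sign, and disposes of the involution with a one-line appeal to computation.
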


\begin{proof}
The function $f$ is continuous and differentiable. Its first derivative can be computed, for all $x\in\opint{9,\infty}$, as
\[
f'(x)=\frac{-8(1-3\sqrt x)}{\sqrt x (3-\sqrt x)^3}.
\]
For $x>9$ it follows that we have $\sqrt x >3$, $1-3\sqrt x <0$, and $(3-\sqrt x)^3 <0$ and therefore also that $f'(x)<0$ for all $x\in\rint{9,\infty}$, i.e., $f$ is strictly decreasing.

By simple computations it can be easily verified that indeed $f$ is an involution, i.e., if $f(x)=y$ then also $x=f(y)$ and therefore $x=f(y)=f(f(x))$ for arbitrary $x\in\opint{9,\infty}$.
\end{proof}

Note that since $f$ is continuous and strictly decreasing, its range is a convex set. The boundary limits can be computed as $\lim_{x\to 9}f(x)=\infty$ and $\lim_{x\to \infty}f(x)=9$ such that $\Ran_f=\opint{9,\infty}$. Moreover, $\mr$ is one of the fixpoints of $f$.

Let us now turn to a series of results which emphasizes different aspects in the dominance relationship between the members of the family of Sugeno-Weber t-norms.

\begin{corollary}\label{lem0}
For all $\alpha\in\cint{0,9}$, it holds that $\TSW{\alpha}$ dominates $\TSW{\beta}$ for all $\beta\geq \alpha$.
\end{corollary}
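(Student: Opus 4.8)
The plan is to read off the claim directly from the characterization in Theorem~\ref{thm:main1}, with $\alpha$ in the role of $\lambda$ and $\beta$ in the role of $\mu$. The only input needed beyond the theorem is the fact, recorded just above, that $\Ran_f=\opint{9,\infty}$, which guarantees $f(\beta)>9$ for every finite $\beta\in\opint{9,\infty}$.

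First I would dispose of the degenerate cases by matching them to the corresponding clauses of Theorem~\ref{thm:main1}. If $\alpha=0$, condition~(i) gives $\TSW{\alpha}\gg\TSW{\beta}$ for every $\beta$; if $\beta=\infty$, condition~(ii) applies; and if $\beta=\alpha$, condition~(iii) applies. Hence it remains to treat $0<\alpha\le 9$ together with $\alpha<\beta<\infty$.

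Next I would split on the position of $\beta$ relative to $\mr$, using that $\mr>9\ge\alpha$. If $\beta\le\mr$, then $0<\alpha<\beta\le\mr$, which is exactly condition~(iv), so $\TSW{\alpha}\gg\TSW{\beta}$. If instead $\beta>\mr$, then in particular $\beta\in\opint{9,\infty}$, so $f(\beta)$ is defined; since $\Ran_f=\opint{9,\infty}$ we have $f(\beta)>9\ge\alpha$, whence $0<\alpha\le f(\beta)=\bigl(\tfrac{1-3\sqrt\beta}{3-\sqrt\beta}\bigr)^2$, which is condition~(v). In either case the relevant clause of Theorem~\ref{thm:main1} is satisfied, and the claim follows.

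There is essentially no hard step here: once Theorem~\ref{thm:main1} and the range computation $\Ran_f=\opint{9,\infty}$ are in hand, the whole argument is a short case distinction. The only point requiring a moment's care is the inequality $f(\beta)>9$ for $\beta>\mr$, which is exactly what the limit $\lim_{x\to\infty}f(x)=9$ together with the monotonicity of $f$ supplies. It is also this bound that explains the appearance of $9$ in the hypothesis: for $\alpha\in\opint{9,\mr}$ the strictly decreasing $f$ takes values below $\alpha$ for sufficiently large $\beta$, so condition~(v) eventually fails, showing that the interval $\cint{0,9}$ cannot be enlarged while retaining dominance for \emph{all} $\beta\ge\alpha$.
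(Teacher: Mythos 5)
Your proposal is correct and follows essentially the same route as the paper's own proof: dispose of the trivial cases, then split on $\beta\le\mr$ (condition~(iv) of Theorem~\ref{thm:main1}) versus $\beta>\mr$, where $f(\beta)>9\ge\alpha$ gives condition~(v). Your treatment is in fact marginally more careful, since you invoke condition~(i) explicitly for $\alpha=0$ (condition~(iv) requires $0<\lambda$), a case the paper's proof passes over silently.
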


\begin{proof}
Let $\alpha$ be an arbitrary real number from $\cint{0,9}$ and choose an arbitrary $\beta\geq \alpha$. If $\alpha=\beta$ or $\beta=\infty$, the dominance relationship trivially holds. We therefore assume that $\alpha<\beta<\infty$. If $\beta\leq \mr$ then $\TSW{\alpha}$ dominates $\TSW{\beta}$ because of Theorem~\ref{thm:main1}~(iv). If $\beta>\mr$, then $f(\beta)> 9$ and therefore $f(\beta)>\alpha$ such that $\TSW{\alpha}$ dominates $\TSW{\beta}$ because of Theorem~\ref{thm:main1}~(v).
\end{proof}

Therefore for all $\alpha\in\cint{0,9}$ the set $\mathcal{D}_\alpha=\{\beta\mid\TSW{\alpha}\gg\TSW{\beta}\}$ is of the form $\cint{\alpha,\infty}$. In case that $\alpha\in\opint{9,\mr}$, $\mathcal{D}_\alpha$ equals $\cint{\alpha,f(\alpha)}\cup\{\infty\}$ as the following Corollary shows.

\begin{corollary}\label{lem1}
For all $\alpha \in \opint{9,\mr}$ there exists some $\beta_\alpha\geq \mr$ such that
\begin{ilist}
\item $\forall \gamma\in\cint{\alpha,\beta_\alpha}\colon \TSW{\alpha} \gg \TSW{\gamma}$,
\item $\forall \delta >\beta_\alpha\colon \TSW{\alpha}\gg \TSW{\delta} \iff \delta=\infty$.
\end{ilist}
\end{corollary}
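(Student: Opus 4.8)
The plan is to read off the structure of the set $\mathcal{D}_\alpha=\{\beta\mid\TSW{\alpha}\gg\TSW{\beta}\}$ directly from Theorem~\ref{thm:main1}, specialized to the regime $\alpha\in\opint{9,\mr}$. Fix such an $\alpha$ and define the candidate threshold $\beta_\alpha:=f(\alpha)$, where $f$ is the function from \eqref{eq:deff}. Since $\alpha\in\opint{9,\mr}$ and $f$ is strictly decreasing with $f(\mr)=\mr$ and $\lim_{x\to9}f(x)=\infty$, we have $f(\alpha)\in\opint{\mr,\infty}$, so in particular $\beta_\alpha>\mr$, as required. The whole argument then amounts to checking that, for $\gamma$ finite and $\gamma\ge\alpha$, the dominance $\TSW{\alpha}\gg\TSW{\gamma}$ holds exactly when $\gamma\le\beta_\alpha$, together with the observation that $\TSW{\alpha}\gg\TSW{\infty}$ always holds (condition (ii) of Theorem~\ref{thm:main1}).

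For part (i), I would split the range $\gamma\in\cint{\alpha,\beta_\alpha}$ at $\mr$. If $\alpha\le\gamma\le\mr$, then dominance holds by Theorem~\ref{thm:main1}~(iv) (this is just the finite part of Corollary~\ref{lem0} applied with the roles adapted, or directly condition (iv)). If instead $\mr<\gamma\le\beta_\alpha=f(\alpha)$, then because $f$ is strictly decreasing and involutive we have $f(\gamma)\ge f(f(\alpha))=\alpha$, so $\alpha\le f(\gamma)=\bigl((1-3\sqrt\gamma)/(3-\sqrt\gamma)\bigr)^2$, which is exactly the hypothesis of Theorem~\ref{thm:main1}~(v); hence $\TSW{\alpha}\gg\TSW{\gamma}$. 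For part (ii), take any finite $\delta>\beta_\alpha=f(\alpha)$; then $\delta>\mr$ and strict monotonicity plus involutivity of $f$ give $f(\delta)<f(f(\alpha))=\alpha$, so the inequality $\alpha\le f(\delta)$ of condition (v) fails, and since $\delta>\mr$ condition (iv) is also inapplicable; as $\alpha\ne0$, $\delta\ne\infty$, and $\alpha\ne\delta$, none of (i)--(v) can hold and therefore $\TSW{\alpha}\not\gg\TSW{\delta}$. Combined with $\TSW{\alpha}\gg\TSW{\infty}$, this yields the stated equivalence $\TSW{\alpha}\gg\TSW{\delta}\iff\delta=\infty$ for $\delta>\beta_\alpha$.

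The only genuinely delicate point is the transition at $\gamma=\mr$ and, more importantly, making sure the two descriptions (iv) and (v) agree at the crossover so that $\mathcal{D}_\alpha\cap\opint{0,\infty}$ really is the single interval $\cint{\alpha,f(\alpha)}$ with no gap or overlap. This is exactly where the involutivity and strict monotonicity of $f$ established in the preceding Corollary do the work: they guarantee that the inequality $\alpha\le f(\gamma)$ in (v) is equivalent to $\gamma\le f(\alpha)$, so the threshold is unambiguous and $\beta_\alpha=f(\alpha)$ is attained (the inequality in (v) is non-strict, so $\gamma=f(\alpha)$ is included). I expect no computational obstacle here; the entire proof is a case analysis that pushes the monotonicity and fixed-point properties of $f$ through the five conditions of Theorem~\ref{thm:main1}, and the main care required is bookkeeping the non-strict versus strict inequalities at the endpoint $\beta_\alpha$.
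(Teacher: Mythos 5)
Your proposal is correct and follows essentially the same route as the paper's proof: define $\beta_\alpha=f(\alpha)$, split $\cint{\alpha,\beta_\alpha}$ at $\mr$, and push the strict monotonicity and involutivity of $f$ through conditions (iv) and (v) of Theorem~\ref{thm:main1}, with condition (ii) handling $\delta=\infty$. The only slip is at the endpoint $\gamma=\alpha$, where condition (iv) is inapplicable (it requires $\lambda<\mu$ strictly) and your parenthetical appeal to Corollary~\ref{lem0} also fails (it assumes $\alpha\leq 9$); the paper covers this case by citing condition (iii) alongside (iv).
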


\begin{proof}
Consider some $\alpha\in\opint{9,\mr}$. Define $\beta_\alpha:=f(\alpha)$.

Since $f$ is continuous and strictly decreasing it obtains its minimal value at its upper boundary. Since $\alpha\leq\mr$ it follows that $\beta_\alpha=f(\alpha)\geq\mr$ for all $\alpha\in\opint{9,\mr}$.
\begin{ilist}
\item
Consider some $\gamma\in\cint{\alpha,\beta_\alpha}$. If  $\alpha\leq\gamma\leq\mr$, $\TSW{\alpha}$ dominates $\TSW{\gamma}$ because of Theorem~\ref{thm:main1}~(iii) and~(iv). For $\mr<\gamma\leq\beta_\alpha$ the decreasingness and involutivness of $f$ imply that $\alpha=f(f(\alpha))=f(\beta_\alpha)\leq f(\gamma)$ such that $\TSW{\alpha}\gg\TSW{\gamma}$ due to Theorem~\ref{thm:main1}~(v).
\item Consider some $\delta> \beta_\alpha$ then if $\delta=\infty$, $\TSW{\alpha}\gg \TSW{\delta}$ trivially holds. Vice versa if $\TSW{\alpha}$ dominates $\TSW{\delta}$, then necessarily $\delta=\infty$, since $f(\delta)<\alpha$.
\end{ilist}
\end{proof}

Note that for all $\alpha\in \opint{9,\mr}$ it holds that $\TSW{\alpha}$ dominates $\TSW{\beta_\alpha}$, since $\beta_\alpha=f(\alpha)>f(\mr)=\mr>\alpha$. For $\alpha=\mr$, $\beta_\alpha=\mr$, such that $\TSW{\alpha}$ dominates $\TSW{\beta_\alpha}$ since $\alpha=\beta_\alpha$.

Finally for all $\alpha\geq \mr$ it holds that $\mathcal{D}_\alpha$ just consists of $\alpha$ and $\infty$.
\begin{corollary}\label{lem3}
For all $\alpha_1,\alpha_2\geq \mr$:
\[
\TSW{\alpha_1}\gg \TSW{\alpha_2} \quad \Rightarrow \quad \alpha_1=\alpha_2 \vee \max(\alpha_1, \alpha_2)=\infty.
\]
\end{corollary}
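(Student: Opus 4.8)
The plan is to argue by contradiction, reducing the claim to the case where both parameters are finite and distinct, and then to exploit the fact that $\mr$ occurs as a sharp threshold in Theorem~\ref{thm:main1}. So I would suppose $\alpha_1,\alpha_2\geq\mr$ with $\TSW{\alpha_1}\gg\TSW{\alpha_2}$, and assume towards a contradiction that neither disjunct in the conclusion holds, i.e.\ that $\alpha_1\neq\alpha_2$ and $\max(\alpha_1,\alpha_2)<\infty$. The first observation is that dominance induces the pointwise order, and the family $(\TSW{\lambda})$ is decreasing in its parameter, so $\TSW{\alpha_1}\gg\TSW{\alpha_2}$ forces $\alpha_1\leq\alpha_2$. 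Combined with $\alpha_1\neq\alpha_2$ and finiteness, this yields the strict chain $\mr\leq\alpha_1<\alpha_2<\infty$, which is the configuration I then want to rule out.

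Next I would feed this configuration into the characterization. Since $0<\alpha_1<\alpha_2<\infty$, the dominance $\TSW{\alpha_1}\gg\TSW{\alpha_2}$ can only be certified by condition (iv) or condition (v) of Theorem~\ref{thm:main1}. Condition (iv) demands $\alpha_2\leq\mr$; but $\alpha_2>\alpha_1\geq\mr$ gives $\alpha_2>\mr$, so (iv) is excluded at once. Hence only condition (v) can apply, and it tells us exactly that
\[
\alpha_1\leq\Bigl(\tfrac{1-3\sqrt{\alpha_2}}{3-\sqrt{\alpha_2}}\Bigr)^2=f(\alpha_2),
\]
with $f$ the function of the preceding corollary.

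The decisive step, and the only part that is not pure bookkeeping, is to contradict this inequality using the behaviour of $f$ at its fixpoint. Since $\alpha_2>\mr>9$ lies in the domain $\opint{9,\infty}$ of $f$, and since $f$ is strictly decreasing there with $\mr$ among its fixpoints, strict monotonicity gives $f(\alpha_2)<f(\mr)=\mr$. Therefore $f(\alpha_2)<\mr\leq\alpha_1$, in direct conflict with $\alpha_1\leq f(\alpha_2)$. This contradiction closes the argument, forcing $\alpha_1=\alpha_2$ or $\max(\alpha_1,\alpha_2)=\infty$. I expect the only point needing care to be the invocation of the fixpoint identity $f(\mr)=\mr$ together with strict decreasingness of $f$ (both established in the preceding corollaries); this is precisely what makes the threshold sit exactly at $\mr$ and renders the interval $\cint{\mr,\infty}$ disjoint from the $f$-image of $\opint{\mr,\infty}$, so that no nontrivial dominance between two members with parameters $\geq\mr$ can survive.
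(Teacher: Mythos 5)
Your proof is correct and is essentially the argument the paper intends: the paper states this corollary without an explicit proof, as an immediate consequence of Theorem~\ref{thm:main1} together with the strict decreasingness of $f$ and the fixpoint identity $f(\mr)=\mr$, which is exactly how you argue. The reduction to $\mr\leq\alpha_1<\alpha_2<\infty$ via the order induced by dominance, the exclusion of conditions (i)--(iv), and the contradiction $f(\alpha_2)<f(\mr)=\mr\leq\alpha_1$ against condition (v) are all sound.
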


From the decreasingness of $f$ we immediately conclude:
\begin{corollary}\label{lem2}
For all $\alpha_1,\alpha_2\in \opint{9,\mr}$:
\[
\alpha_1\geq \alpha_2 \quad \Rightarrow \quad \beta_{\alpha_1}\leq \beta_{\alpha_2}.
\]
\end{corollary}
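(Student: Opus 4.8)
The plan is to derive the conclusion directly from the monotonicity of the function~$f$ established earlier. Recall that in the proof of Corollary~\ref{lem1} we set $\beta_\alpha:=f(\alpha)$, and that the corollary on the properties of~$f$ already shows that $f$ is strictly decreasing on $\opint{9,\infty}$. Since $\alpha_1,\alpha_2\in\opint{9,\mr}\subseteq\opint{9,\infty}$, the hypothesis $\alpha_1\geq\alpha_2$ places us squarely in the domain where this monotonicity applies.

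The single key step is then to invoke strict decreasingness: from $\alpha_1\geq\alpha_2$ we obtain $f(\alpha_1)\leq f(\alpha_2)$, which, via the defining identity $\beta_{\alpha_i}=f(\alpha_i)$, is exactly $\beta_{\alpha_1}\leq\beta_{\alpha_2}$, the desired inequality.

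I do not anticipate any genuine obstacle, since the entire content has been front-loaded into the earlier statement that $f$ is strictly decreasing; the present corollary is merely the translation of that monotonicity into the $\beta$-notation. The only point meriting a moment's care is to confirm that the restricted domain $\opint{9,\mr}$ lies inside the domain $\opint{9,\infty}$ on which $f$, and hence its monotonicity, was defined, which is immediate because $\mr<\infty$.
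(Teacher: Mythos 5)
Your proof is correct and takes exactly the paper's route: the paper derives Corollary~\ref{lem2} immediately from the strict decreasingness of $f$ together with the definition $\beta_\alpha=f(\alpha)$, which is precisely your argument. The domain check $\opint{9,\mr}\subseteq\opint{9,\infty}$ is a fine (if minor) addition of care.
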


These results allow for an alternative proof of the transitivity of dominance in the family of Sugeno-Weber t-norms.

\subsection{Alternative proof for transitivity}\label{sec:AltProofTrans}
Consider three members $\TSW{a}$, $\TSW{b}$, $\TSW{c}$ of the family of Sugeno-Weber t-norms with arbitrary $a,b,c\in \cint{0,\infty}$. We assume w.l.o.g.\ that $a\neq b\neq c\neq a$. Assume that $\TSW{a}\gg \TSW{b}$ and $\TSW{b}\gg \TSW{c}$ then $a\leq b\leq c$ due to the ordering. We additionally assume that $c <\infty$ for which $\TSW{a}\gg \TSW{c}$ trivially holds. For showing that indeed also $\TSW{a}$ dominates $\TSW{c}$ we distinguish the following cases:
\begin{description}
\item[Case 1.] If $b \geq \mr$, then $c\geq \mr$ and therefore, because of Corollary~\ref{lem3}, $b=c$ or $c=\infty$, the latter being a contradiction.
\item[Case 2.] If $9<b < \mr$, then there exists, because of Corollary~\ref{lem1}, some $\beta_b$ such that $c\in \lint{b, \beta_b}$. Since $a\leq b$, it follows that $\beta_a\geq \beta_b$ (Corollary~\ref{lem2}), and therefore $c\in\lint{b,\beta_b}\subseteq \lint{a,\beta_a}$ such that $\TSW{a}\gg \TSW{c}$ due to Corollary~\ref{lem1}.
\item[Case 3.] If $b\leq 9$, then also $a\leq 9$ such that $\TSW{a}$ dominates $\TSW{b}$.
\end{description}

In all cases $\TSW{a}$ dominates $\TSW{c}$ such that the transitivity of dominance in this family is proven.


\section*{Acknowledgements}
The authors would like to thank Peter Paule for indicating that CAD might be helpful for solving the problem of dominance in the family of Sugeno-Weber t-norms and as such initiating the collaboration between the authors and Peter Sarkoci for the idea of representing dominating t-norms by solution sets and for fruitful discussions during an early stage of these investigations.

\end{document}